\DeclareMathOperator{\id}{id}
\DeclareMathOperator{\Spec}{Spec}
\DeclareMathOperator{\Proj}{Proj}
\DeclareMathOperator{\im}{im}
\DeclareMathOperator{\CH}{CH}
\DeclareMathOperator{\A}{A}
\DeclareMathOperator{\B}{B}
\DeclareMathOperator{\Pic}{Pic}
\DeclareMathOperator{\ch}{ch}
\DeclareMathOperator{\Sq}{Sq_1}
\DeclareMathOperator{\Squ}{Sq^2}
\DeclareMathOperator{\Sing}{Sing}
\DeclareMathOperator{\trdeg}{tr.deg.}
\DeclareMathOperator{\codim}{codim}
\newcommand{\iu}{\mathfrak{i}_1}
\newcommand{\Oc}{\mathcal{O}}
\newcommand{\Sc}{\mathcal{S}}
\newcommand{\Cc}{\mathcal{C}}
\newcommand{\Dc}{\mathcal{D}}
\newcommand{\Ec}{\mathcal{E}}
\newcommand{\Sch}{\mathsf{Sch}_S}
\newcommand{\Scht}{\mathsf{Sch}_T}
\newcommand{\Hom}{\mathsf{Hom}}
\newcommand{\Ab}{\mathsf{Ab}}
\newcommand{\coshv}{\mathsf{coShv}}
\newcommand{\Zz}{\mathbb{Z}}
\newcommand{\Qq}{\mathbb{Q}}
\newcommand{\Pp}{\mathbb{P}}
\newcommand{\Aa}{\mathbb{A}}
\newcommand{\K}{K}
\newcommand{\Tan}{T}
\newcommand{\Ko}[2]{K_{#1}(\Oc_{#2})}
\newcommand{\HK}[3]{H^{#1}_{#2}({#3},\Ko{#1}{#3})}
\newtheorem{theorem}{Theorem}[section]
\newaliascnt{proposition}{theorem}
\newtheorem{proposition}[proposition]{Proposition}
\newaliascnt{lemma}{theorem}
\newtheorem{lemma}[lemma]{Lemma}
\newaliascnt{corollary}{theorem}
\newtheorem{corollary}[corollary]{Corollary}
\newaliascnt{conjecture}{theorem}
\newtheorem{conjecture}[conjecture]{Conjecture}
\theoremstyle{definition}
\newaliascnt{remark}{theorem}
\newtheorem{remark}[remark]{Remark}
\newaliascnt{example}{theorem}
\newtheorem{example}[example]{Example}
\newaliascnt{definition}{theorem}
\newtheorem{definition}[definition]{Definition}
\begin{document}
\begin{abstract}
Using Lipman's results on resolution of two-dimensional singularities, we provide a form of resolution of singularities in codimension two for reduced quasi-excellent schemes. We deduce that operations of degree less than two on algebraic cycles are characterised by their values on classes of regular schemes. We provide several applications of this ``detection principle'', when the base is an arbitrary regular excellent scheme: integrality of the Chern character in codimension less than three, existence of weak forms of the second and third Steenrod squares, Adem relation for the first Steenrod square, commutativity and Poincar\'e duality for bivariant Chow groups in small degrees. We also provide an application to the possible values of the Witt indices of non-degenerate quadratic forms in characteristic two.
\end{abstract}
\author{Olivier Haution}
\title{Detection by regular schemes in degree two}
\email{olivier.haution at gmail.com}
\address{Mathematisches Institut, Ludwig-Maximilians-Universit\"at M\"unchen, Theresienstr.\ 39, D-80333 M\"unchen, Germany}

\subjclass[2010]{14C40, 14E15}

\keywords{Steenrod operations, Chern character, Resolution of singularities, Bivariant classes, Operational Chow group, Witt indices}

\maketitle

\section*{Introduction}
Resolution of singularities, which at the moment is only available for $\Qq$-schemes, is a fundamental tool for many questions related to algebraic cycles, for example for the study of operations on Chow groups (see e.g.\ \cite{reduced}). When considering operations of degree $\leq n$, it is often sufficient to resolve singularities of codimension $\leq n$, that is, given a scheme $X$, to find a proper birational morphism $f\colon X' \to X$ such that the singular locus of $X'$ has codimension $>n$. For $n=1$ this remark has been exploited in \cite{firstsq}, together with the observation that we can use the normalisation morphism as a resolution of singularities of codimension one. In the present paper, we observe that it is enough to require that $f$ desingularise $X$ up to codimension $n$ (in the terminology of \cite{Temkin-desing,Temkin-duke}), which means that the singular locus of $X'$ is mapped to codimension $>n$ in $X$. We prove in \autoref{th:res} that desingularisation up to codimension two is possible: any reduced quasi-excellent scheme $X$ admits a proper birational morphism $f \colon \widetilde{X} \to X$ mapping the singular locus of $\widetilde{X}$ to a subset of codimension at least three in $X$.

We then provide some applications to algebraic cycles on schemes over a regular base, using \autoref{th:res} as a replacement for resolution of singularities.\\

It is worth noting that something close to desingularisation up to codimension
$n$ was used already by Hironaka in \cite{Hir-64} under the name ``localization of resolution data'', see \cite[Chapter IV, section 1]{Hir-64} (e.g.\ Propositions 1 and 2 in loc.cit.). However, both in \cite{Hir-64} or \cite{Temkin-desing,Temkin-duke} desingularisation up to codimension $n$ is only used as a subroutine of a global desingularisation method or algorithm; one of the purposes of this paper is to illustrate how this procedure can be used outside of the realm of desingularisation theory.\\

The paper is organised as follows. In \autoref{sect:conventions}, we fix the conventions used in the text.

In \autoref{sect:resolv}, we prove the main result, \autoref{th:res} mentioned above. The strategy for the proof is the following. We may assume that $X$ is normal. Then there are only finitely many singular points $s_1,\dots,s_n$ of codimension $\leq 2$ in $X$. Using resolution of singularities in dimension two, we can resolve each of the local schemes $Y_i=\Spec \Oc_{X,s_i}$. The next step is to extend these local resolutions to a proper birational morphism $f \colon \widetilde{X} \to X$ inducing an isomorphism outside of the closure of $\{s_1,\dots,s_n\}$. This is possible since a resolution of singularities of $Y_i$ may be achieved by the blow-up along a \emph{zero-dimensional} closed subscheme (we refer to a paper of Lipman for this result\footnote{as pointed out by the referee, the minimal desingularisation input that we need is the following: if $X$ is a quasi-excellent two-dimensional normal local scheme, then there is a regular scheme $X'$, and a proper birational morphism $X' \to X$ which induces an isomorphism over the complement of the closed point in $X$}). Any singular point $x$ of $\widetilde{X}$ lies over a singular point $f(x)$ of $X$; on the other hand if $f(x)$ has codimension two in $X$, it is one the points $s_1,\dots,s_n$, and it follows from the construction that $\widetilde{X}$ must be regular at $x$.

In \autoref{sect:procedure}, we translate the main result into a technical statement (\autoref{lemm:test1}) adapted to the situations considered in each of the subsequent sections. It expresses the idea that the vanishing of an operation on algebraic cycles, which lowers the dimension by $\leq 2$, is detected by its action on the fundamental classes of regular schemes.

In \autoref{sect:integrality}, we deduce an integrality property for the Chern character in codimension two, and then show that it actually implies the same property in codimension three. We construct two operations on Chow groups modulo two (and two-torsion for the target), which are related to the second and third Steenrod squares. As an application, we deduce a result contained in Hoffmann's conjecture, concerning Witt indices of quadratic forms; this is a new statement when the base field has characteristic two.

In \autoref{sect:adem} we extend, by descent, the definition of the first Steenrod square $\Sq$, given in \cite{duality} for quasi-projective varieties, to arbitrary schemes of finite type over a field. We prove the relation $\Sq \circ \Sq=0$, which was apparently out of reach of the techniques used in \cite{duality}. The argument, as well as the construction of $\Sq$, do not depend on the characteristic of the base field; in characteristic different from two, a different construction of the operation $\Sq$ has been given in \cite{Bro-St-03,Vo-03}, and the relation $\Sq \circ \Sq=0$ was known, being one of the Adem relations.

In \autoref{sect:bivariant}, we discuss consequences for the bivariant Chow group. We prove an absolute form of Poincar\'e duality, which implies that the operational and usual Chow groups of a regular excellent scheme coincide in degrees $\leq 2$. We also discuss commutativity of bivariant classes of small degree; in particular we prove that any class of degree $\leq 2$ commuting with proper push-forwards, and pull-backs along smooth morphisms and regular closed embeddings automatically commutes with flat pull-backs. Here resolution singularities is combined with another, a priori independent, feature of codimension $\leq 2$: bivariant classes of degree $\leq 2$ can be expressed using Chern classes with supports. In higher degrees, denominators appear, and one needs other sources of bivariant classes, such as $K$-cohomology. Finally we explain how these results can be extended to classes of arbitrary degrees in characteristic zero, where both resolution of singularities and Gersten's conjecture are available.

\paragraph{\textbf{Acknowledgements.}} I am grateful to the referee for his remarks, which helped improve the exposition.

\section{Terminology and conventions}
\label{sect:conventions}
\subsection{Schemes and morphisms} All schemes are noetherian and separated. When $S$ is a scheme, we denote by $\Sch$ the category schemes of finite type over $S$ and proper $S$-morphisms.

We say that a morphism $Y \to X$ is, or induces, an isomorphism outside of a closed subset $Z$ of $X$ if the base change $(X-Z)\times_X Y \to X-Z$ is an isomorphism.

\subsection{Regularity} We say that a scheme $X$ is \emph{regular at a point $x$} when the local ring $\Oc_{X,x}$ is regular. Otherwise we say that $x$ is a \emph{singular point} of $X$. The \emph{singular locus} $\Sing X$ is the set of such points. A scheme is \emph{regular} if it is regular at each of its points.

\subsection{Normalisation} When $X$ is an integral scheme, its \emph{normalisation} $X' \to X$ is the affine morphism given locally by the integral closure of its coordinate ring in its function field. More generally, when $X$ is an arbitrary scheme, its normalisation is the composite $\coprod_i X_i' \to\coprod_i X_i\to X$, where $X_i$ are the irreducible components of $X$, and $X_i' \to X_i$ their normalisations. 

\subsection{Excellence} A scheme $S$ is called \emph{quasi-excellent} if
\begin{itemize}[label=---]
\item any integral scheme finite over $S$ admits an open dense regular subscheme,

\item and for any closed point $x$ of $X$, the completion morphism of $\Spec \Oc_{X,x}$ has geometrically regular fibres.
\end{itemize}
If $S$ is quasi-excellent, then any scheme of finite type over $S$ is quasi-excellent, and so is the localisation of $S$ at any of its points \cite[\S2, Th\'eor\`eme~5.1]{Gabber-book}. The spectrum of a field, and of $\Zz$, is quasi-excellent, so that the class of quasi-excellent schemes contains all varieties and arithmetic schemes. The normalisation of a quasi-excellent scheme is finite \cite[(7.6.1)]{ega-4}; the singular locus of a quasi-excellent scheme is closed \cite[(6.12.3)]{ega-4}. For a scheme of finite type over a regular scheme, we will use the terminology \emph{excellent} instead of quasi-excellent; this is compatible with \cite[(7.8.2)]{ega-4}.

\subsection{Projectivity} A morphism $Y \to X$ is \emph{projective} if there is a quasi-coherent sheaf of graded $\Oc_X$-algebras $\Sc_{\bullet}$, with $\Sc_1$ a coherent $\Oc_X$-module generating $\Sc_{\bullet}$ as an $\Oc_X$-algebra, and an isomorphism over $X$ between $Y$ and $\Proj_X \Sc_{\bullet}$. A morphism is \emph{quasi-projective} if it decomposes as an open immersion followed by a projective morphism.

\subsection{Envelopes}
\label{def:envelope}
A proper morphism $f \colon Y \to X$ is an \emph{envelope} if for any integral closed subscheme $Z$ of $X$, there is a closed subscheme $W$ of $Y$ such that $f$ induces a birational morphism $W \to Z$. It is equivalent to require that $Y(K) \to X(K)$ be surjective for all fields $K$. When $X$ is of finite type over $S$, a \emph{Chow envelope} over $S$ is an envelope $Y\to X$ with $Y$ quasi-projective over $S$. By Chow's Lemma \cite[(5.6.1)]{ega-2} and noetherian induction, such an envelope always exists. 

\subsection{Dimension}
\label{conv:reldim}
The \emph{codimension} $\codim_X(x)$ of a point $x$ in a scheme $X$ is the dimension of the local ring $\Oc_{X,x}$. When $Y$ is a closed subset of $X$, its codimension $\codim_X(Y)$ is the infimum of the codimensions in $X$ of the points of $Y$. When $X$ is an integral scheme of finite type over a scheme $S$, the \emph{dimension of $X$ over $S$} is defined as
\[
\dim_S X = \trdeg(\kappa(X)/\kappa(\overline{X})) - \codim_S(\overline{X}),
\]
where $\overline{X}$ is the scheme theoretic image of $X \to S$, and $\kappa(X)$, resp.\ $\kappa(\overline{X})$, the function field of $X$, resp.\ $\overline{X}$. When $Z$ is an integral closed subscheme of $X$, we have, using the dimension formula \cite[(5.6.5.1)]{ega-4},
\begin{align*}
\dim_S Z 
&= \trdeg(\kappa(Z)/\kappa(\overline{Z})) - \codim_S(\overline{Z}) \\ 
&\leq \trdeg(\kappa(Z)/\kappa(\overline{Z})) - \codim_{\overline{X}}(\overline{Z})-\codim_S(\overline{X}) \\
 &\leq \trdeg(\kappa(X)/\kappa(\overline{X})) - \codim_X(Z) - \codim_S(\overline{X})\\
 &= \dim_S X - \codim_X(Z).
\end{align*}
In particular $\dim_S Z \leq \dim_S X$, so that when $Y$ is a scheme of finite type over $S$, we may define $\dim_S Y$ as the supremum of the integers $\dim_S Z$ for $Z$ an integral closed subscheme of $Y$. 

Let $X$ be an integral scheme of finite type over $S$, and $Y$ a closed subscheme of $X$. Since for any integral closed subscheme $Z$ of $Y$, we have $\dim_S Z \leq \dim_S X - \codim_X(Z) \leq \dim_S X - \codim_X(Y)$, it follows that
\begin{equation}
\label{eq:dimS}
\dim_S Y \leq \dim_S X - \codim_X(Y).
\end{equation}

\subsection{Abelian groups} 
\label{conv:abelian}
We denote by $\Ab$ the category of abelian groups. Let $p$ be a prime number. We denote by $\Zz_{(p)}$ the subgroup of $\Qq$ consisting of those fractions whose denominator is prime to $p$. When $A$ is an abelian group and $a \in A \otimes_\Zz \Qq$, we let $v_p(a)$ be the largest integer $k$ such that $p^{-k}\cdot a$ belongs to the image of $A \otimes_\Zz \Zz_{(p)} \to A\otimes_\Zz \Qq$ (we set $v_p(0)=\infty$). When $t$ is an integer, the group $A \otimes_\Zz (\Qq/(p^{-t}\cdot \Zz_{(p)}))$ may be identified with the quotient of the group $A\otimes_\Zz \Qq$ by the subgroup of elements $a$ satisfying $v_p(a) \geq -t$.

\subsection{Chow groups, Grothendieck groups}
\label{conv:chow}
Let $S$ be a regular scheme. The Grothendieck group of coherent sheaves gives a functor $\K_0'\colon \Sch \to \Ab$. For an object $X$ of $\Sch$, the subgroup $\K_0'(X)_{(d)} \subset \K_0'(X)$ is generated by the elements $[\Oc_Z]$, where $Z$ is a closed subscheme of $X$ with $\dim_S Z \leq d$ (see \ref{conv:reldim}). The Chow group of cycles of dimension $j$ over $S$ is a functor $\CH_j\colon \Sch \to \Ab$, see \cite[\S20]{Ful-In-98}. We have the $j$-th homological Chern character, a natural transformation (this is the $j$-th component of the morphism $\tau$ of \cite[Theorem~18.3, \S20]{Ful-In-98})
\[
\ch_j \colon \K_0'(-) \to \CH_j(-) \otimes_\Zz \Qq.
\]

\section{Resolving singularities in codimension two}
\label{sect:resolv}

\begin{theorem}
\label{th:res}
Let $X$ be a reduced quasi-excellent scheme. Then there is a reduced scheme $\widetilde{X}$ and a projective birational morphism $f \colon \widetilde{X} \to X$ with the following properties.
\begin{enumerate}[label=(\roman{*}), ref=(\roman{*})]
\item \label{cond:reg} The scheme $\widetilde{X}$ is regular at any of his points $x$ such that $f(x)$ has  codimension $\leq 2$ in $X$.

\item \label{cond:isom} The morphism $f$ is an isomorphism outside of the singular locus of $X$.
\end{enumerate}
\end{theorem}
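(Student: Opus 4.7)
\emph{Plan.} I reduce first to the case where $X$ is normal. The normalisation $\nu \colon X^\nu \to X$ is finite by quasi-excellence, hence projective and birational, and an isomorphism outside $\Sing X$. Since finite morphisms preserve codimensions of points, any $\widetilde{X} \to X^\nu$ satisfying (i) and (ii) relative to $X^\nu$ yields, upon composition with $\nu$, one satisfying them relative to $X$. I thus assume $X$ normal.

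Then $\Sing X$ is closed of codimension at least two (Serre's criterion $R_1$), and the singular points of $X$ of codimension exactly two form a finite set $\{s_1,\ldots,s_n\}$, namely the generic points of the codimension-two components of $\Sing X$. These are the only points where condition (i) can fail. For each $i$, the local scheme $Y_i = \Spec \Oc_{X,s_i}$ is a two-dimensional normal quasi-excellent local scheme; by Lipman's theorem (in the form recalled in the footnote of the introduction), it admits a projective birational morphism $g_i \colon Y_i' \to Y_i$ with $Y_i'$ regular and isomorphic over $Y_i \setminus \{s_i\}$, realised as the blow-up of a coherent ideal $I_i \subset \Oc_{Y_i}$ whose zero locus is the closed point.

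To globalise, I let $Z_i = \overline{\{s_i\}}$ be the codimension-two integral closed subscheme of $X$ with generic point $s_i$. Since the $s_i$ all have the same codimension in $X$, they are pairwise non-specialising, so $s_i \notin Z_j$ whenever $i \neq j$. A standard spreading-out argument then yields, for each $i$, a coherent ideal $\mathcal{J}_i \subset \Oc_X$ with $\Oc_X / \mathcal{J}_i$ supported on $Z_i$ and with stalk $(\mathcal{J}_i)_{s_i} = I_i$. The product $\mathcal{J} = \mathcal{J}_1 \cdots \mathcal{J}_n$ is then a coherent ideal whose zero locus lies in $\bigcup_i Z_i \subset \Sing X$, and whose stalk at each $s_i$ equals $I_i$ (the other $\mathcal{J}_j$ having trivial stalk there).

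Finally I take $f \colon \widetilde{X} = \mathrm{Bl}_{\mathcal{J}} X \to X$, which is projective and birational. It is an isomorphism outside the zero locus of $\mathcal{J}$, hence outside $\Sing X$, proving (ii). For (i), let $x \in \widetilde{X}$ with $\codim_X f(x) \leq 2$: either $f(x)$ is a regular point of $X$, so that $f$ is a local isomorphism at $x$; or $f(x) = s_i$ for some $i$, and base change of $f$ along $Y_i \to X$ recovers the resolution $g_i$, whose source is regular. Either way $\widetilde{X}$ is regular at $x$. The main technical hurdle I anticipate is the globalisation step: producing the coherent ideals $\mathcal{J}_i$ on $X$ with prescribed stalk $I_i$ at $s_i$ while keeping their supports confined to the $Z_i$, so that the global blow-up of $\mathcal{J}$ correctly recovers each local resolution without introducing new exceptional loci of codimension $\leq 2$.
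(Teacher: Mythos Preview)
Your plan is correct and matches the paper's proof: reduce to the normal case via normalisation, locate the finitely many codimension-two singular points $s_i$, resolve each local scheme $Y_i=\Spec \Oc_{X,s_i}$ via Lipman by blowing up a zero-dimensional centre $Z_i$, globalise to a single blow-up of $X$, and check regularity over each $s_i$ by flat base change. For the globalisation step you flag as the main hurdle, the paper blows up the scheme-theoretic image $W$ of $\coprod_i Z_i \to X$ rather than spreading out each $I_i$ and taking a product of ideals; since formation of scheme-theoretic images and of blow-ups both commute with the flat localisation $Y_i \to X$, one obtains $\widetilde{X}\times_X Y_i \cong \widetilde{Y_i}$ in one stroke---but your variant works just as well (one minor imprecision: it is not that finite morphisms ``preserve codimensions'', but rather that $\codim_{X^\nu}(y)\leq \codim_X(\nu(y))$ for the birational $\nu$, which the paper records as a separate lemma via the dimension formula).
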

\begin{proof}
We first assume that $X$ is normal. Then $\Sing X$ contains no point of codimension $\leq 1$ in $X$. Let $\Sigma$ be the set of points of $\Sing X$ which have codimension $2$ in $X$. We view $\Sigma$ and $\Sing X$ as topological subspaces of $X$. Then $\Sigma$ consists of generic points of $\Sing X$. Since $X$ is quasi-excellent, $\Sing X$ is the support of a noetherian scheme, and therefore the space $\Sigma$ is finite and discrete. Thus we can write $\Sigma=\{s_1,\cdots,s_n\}$, with $s_i$ not belonging to the closure of $\{s_j|j\neq i\}$ in $X$.

For every $i \in \{1,\dots,n\}$, the scheme $Y_i=\Spec \Oc_{X,s_i}$ is a singular normal quasi-excellent scheme of dimension two. By \cite[Theorem p.151, and C p.155]{Lipman-desingularization}, there is a zero-dimensional closed subscheme $Z_i$ of $Y_i$ such that the blow-up $g_i \colon \widetilde{Y_i} \to Y_i$ of $Y_i$ along $Z_i$ is a desingularisation, i.e.\ $\widetilde{Y_i}$ is regular. 

We let $Z=\coprod_{i=1}^n Z_i$ and $Y=\coprod_{i=1}^n Y_i$, and claim that $z \colon Z \to Z \times_X Y$ is an isomorphism. Indeed, since the open subschemes $Z_j \times_X Y_i$ for $i,j = 1,\dots,n$ cover $Z \times_X Y$, it suffices to see that the base change $z_{i,j}\colon Z_j \times_Y Y_i \to Z_j \times_X Y_i$ of $z$ is an isomorphism. If $j \neq i$,  no point of $Y_i$ is mapped to $s_j$, so that $Z_i \times_X Y_j = \emptyset$ and $z_{i,j}$ is an isomorphism. The morphism $Y_i \to X$ is a monomorphism by \cite[I (2.4.2)]{ega-1}, and so is $Y_i \to Y$. Therefore $d_i \colon Z_i \to Z_i \times_Y Y_i$ and $z_{i,i} \circ d_i \colon Z_i \to Z_i \times_X Y_i$ are isomorphisms, and we deduce that $z_{i,i}$ is an isomorphism.

Let $W$ be the scheme theoretic image of $Z\to X$ and $f\colon \widetilde{X} \to X$ be the blow-up of $X$ along $W$. The morphism $Y \to X$ is flat, and by compatibility of scheme theoretic images with flat base change \cite[(2.3.2)]{ega-4}, the scheme theoretic image of the morphism $Z= Y \times_X Z \to Y$ is $Y \times_X W$. Since this morphism is a closed embedding, it follows that $Z \to Y \times_X W$ is an isomorphism. By compatibility of blow-ups with flat base change, we see that the base change of $f$ along $Y \to X$ is the blow-up of $Y$ along $Z$. In particular, we have a cartesian square, for every $i\in \{1,\dots,n\}$,
\begin{equation}
\label{squ}
\begin{gathered}
\xymatrix{
\widetilde{Y_i}\ar[r]^{g_i} \ar[d]_{u_i} & Y_i \ar[d]^{l_i} \\ 
\widetilde{X} \ar[r]^f & X
}
\end{gathered}
\end{equation}

The set theoretic image of $Z\to X$ is $\Sigma$ (this is where we use that each $Z_i$ is zero-dimensional), hence $W$ is supported on the closure of $\Sigma$, which is contained in $\Sing X$. Therefore $f$ is an isomorphism outside of $\Sing X$, and in particular is birational. It is projective, as is any blow-up. Moreover $\widetilde{X}$ is reduced, being the blow-up of a reduced scheme. 

Thus, to conclude the proof in the normal case, it will suffice to verify the condition \ref{cond:reg}. So let $x$ be a point of $\widetilde{X}$ such that $f(x)$ has codimension $\leq 2$ in $X$. If $f(x) \not \in \Sigma$, then $f(x) \not \in \Sing X$ by construction of $\Sigma$. Therefore $f$ induces an isomorphism $\Oc_{X,f(x)} \to \Oc_{\widetilde{X},x}$, and it follows that $\widetilde{X}$ is regular at $x$. Now assume that $f(x) =s_i$ for some $i \in \{1,\dots,n\}$. Let $F$ be the residue field of $\widetilde{X}$ at $x$. Composing the $F$-point given by $x$ with the morphism $f$, we obtain an $F$-point of $X$. Since its image is the point $s_i$, this $F$-point factors through the localisation $l_i\colon Y_i \to X$. In view of the cartesian square \eqref{squ}, we obtain an $F$-point $y$ of $\widetilde{Y_i}$ such that $x=u_i(y)$. Since $\widetilde{Y_i}$ is regular at $y$, and $u_i$ is flat, it follows from \cite[(6.5.2, (i))]{ega-4} that $\widetilde{X}$ is regular at $x$.\\

Now let $X$ be an arbitrary reduced quasi-excellent scheme. Its normalisation $\pi \colon X' \to X$ is finite and birational. Applying the above construction to the quasi-excellent normal scheme $X'$, we obtain a projective birational morphism $f' \colon \widetilde{X} \to X'$ satisfying the conditions \ref{cond:reg} and \ref{cond:isom}. The morphism $f=\pi \circ f'$ is projective and birational. Since $\pi$ induces an isomorphism outside of $\Sing X$, and $f'$ induces an isomorphism outside of $\Sing X' \subset \pi^{-1} \Sing X$, we see that $f$ satisfies \ref{cond:isom}. To see that $f$ satisfies \ref{cond:reg}, let $x$ be a point of $\widetilde{X}$ such that $f(x)$ has codimension $\leq 2$ in $X$. By \autoref{lemm:codim} below applied to the morphism $\pi$, the point $f'(x)$ has codimension $\leq 2$ in $X'$. Since $f'$ satisfies \ref{cond:reg}, it follows that $\widetilde{X}$ is regular at $x$.
\end{proof}

\begin{remark}
From the proof, we see that we may take for $f$ the normalisation of $X$, followed by the blow-up along a closed subscheme of codimension two.
\end{remark}

We will be using the following variant of \autoref{th:res}.
\begin{corollary}
\label{cor:resqproj}
Let $X$ be a reduced quasi-excellent scheme, of finite type over a scheme $S$. Then there is a projective birational morphism $f \colon X' \to X$, with $X'$ reduced and quasi-projective over $S$, and regular at any of his points $x$ such that $f(x)$ has  codimension $\leq 2$ in $X$.
\end{corollary}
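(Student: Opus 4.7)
The plan is first to reduce to the quasi-projective setting using Chow's lemma, and then to apply \autoref{th:res} to the resulting modification.

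By Chow's lemma \cite[(5.6.1)]{ega-2}, applied to the morphism $X \to S$ of finite type, I would first obtain a projective birational morphism $\pi \colon Y \to X$ with $Y$ reduced and quasi-projective over $S$. Since $Y$ is of finite type over the quasi-excellent scheme $X$, it is itself quasi-excellent, and \autoref{th:res} applied to $Y$ produces a projective birational morphism $g \colon X' \to Y$ with $X'$ reduced and regular at every point $x$ such that $g(x)$ has codimension $\leq 2$ in $Y$. The composite $f = \pi \circ g$ is projective and birational from $X'$ to $X$, the scheme $X'$ is reduced, and it is quasi-projective over $S$, being projective over the quasi-projective $S$-scheme $Y$.

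The main obstacle will be to translate the regularity condition from $Y$ back to $X$: for $x \in X'$ with $\codim_X(f(x)) \leq 2$, the property of $g$ yields regularity of $X'$ at $x$ as soon as $\codim_Y(g(x)) \leq 2$. This reduces to the general inequality $\codim_Y(y) \leq \codim_X(\pi(y))$ for the proper birational morphism $\pi$, which I would deduce from the dimension formula \cite[(5.6.5.1)]{ega-4}: applied to the irreducible component of $Y$ containing $y$, which $\pi$ maps birationally onto the corresponding component of $X$, the formula reads $\codim_Y(y) + \trdeg(\kappa(y)/\kappa(\pi(y))) = \codim_X(\pi(y))$, and the nonnegativity of the transcendence degree gives the required bound. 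With this inequality in hand, the regularity of $X'$ at $x$ follows from the conclusion of \autoref{th:res} for $g$.
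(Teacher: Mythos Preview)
Your proposal is correct and follows essentially the same approach as the paper: apply Chow's lemma, then \autoref{th:res}, and compare codimensions along the birational map via the dimension formula---the paper packages this last step as the separate \autoref{lemm:codim}. One small imprecision: the dimension formula \cite[(5.6.5.1)]{ega-4} gives in general only the inequality $\codim_{Y_0}(y) + \trdeg(\kappa(y)/\kappa(\pi(y))) \leq \codim_{X_0}(\pi(y))$ on a suitably chosen irreducible component $Y_0$ (the one realising $\codim_Y(y)$), not an equality; since you only need the bound $\codim_Y(y)\leq\codim_X(\pi(y))$, this does not affect your argument.
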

\begin{proof}
Applying Chow's lemma \cite[(5.6.1)]{ega-2}, we obtain a projective birational morphism $Y \to X$ with $Y$ reduced and quasi-projective over $S$. By \autoref{lemm:codim}, a point has codimension $\leq 2$ in $Y$ as soon as its image has codimension $\leq 2$ in $X$. The statement follows by applying \autoref{th:res} to the scheme $Y$.
\end{proof}

\begin{lemma}
\label{lemm:codim}
Let $f\colon Y \to X$ be a birational morphism of finite type, and $y$ a point of $Y$ with image $x=f(y)$ in $X$. Then we have $\codim_Y(y) \leq \codim_X(x)$.
\end{lemma}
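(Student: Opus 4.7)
The plan is to reduce to the case where $Y$ and $X$ are integral with $f$ dominant, and then invoke the dimension inequality for a morphism of finite type between integral noetherian schemes. For the reduction, note that $\dim \Oc_{Y,y}$ equals the maximum of $\dim(\Oc_{Y,y}/\mathfrak{p})$ as $\mathfrak{p}$ runs over the minimal primes of $\Oc_{Y,y}$, so there exists an irreducible component $Y_0$ of $Y$ through $y$, endowed with its reduced structure, such that $\codim_{Y_0}(y) = \codim_Y(y)$. Let $\eta$ denote its generic point. Since $f$ is birational, $\xi := f(\eta)$ is a generic point of $X$, and the topological image of $Y_0$ is contained in the reduced irreducible closed subscheme $X_0 := \overline{\{\xi\}} \subset X$. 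The restricted morphism $f_0 \colon Y_0 \to X_0$ is then of finite type, dominant, and birational, and since $\Oc_{X_0,x}$ is a quotient of $\Oc_{X,x}$, one has $\codim_{X_0}(x) \leq \codim_X(x)$. Thus the lemma reduces to showing $\codim_{Y_0}(y) \leq \codim_{X_0}(x)$.

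For the integral case, I would apply the inequality form of the dimension formula, which is a consequence of Noether normalisation and which, unlike the equality version exploited in \autoref{conv:reldim}, requires no catenarity hypothesis: for a dominant morphism $f_0 \colon Y_0 \to X_0$ of finite type between integral noetherian schemes,
\[
\dim \Oc_{Y_0, y} + \trdeg(\kappa(y)/\kappa(x)) \leq \dim \Oc_{X_0, x} + \trdeg(\kappa(Y_0)/\kappa(X_0)).
\]
Since $f_0$ is birational the right-most transcendence degree vanishes, and since the one on the left is nonnegative we obtain $\codim_{Y_0}(y) \leq \codim_{X_0}(x)$, as required.

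The proof boils down to a standard invocation of the dimension inequality, so I do not foresee any substantive difficulty; the only care needed is the reduction to integral $Y$ and $X$ when these schemes are reducible, carried out in the first paragraph.
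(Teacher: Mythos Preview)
Your proof is correct and follows essentially the same approach as the paper's: reduce to the integral case by choosing an irreducible component $Y_0$ through $y$ with $\codim_{Y_0}(y)=\codim_Y(y)$, pass to the corresponding component $X_0$ of $X$ via birationality, and apply the dimension inequality \cite[(5.6.5.1)]{ega-4}. Your write-up is more explicit about the reduction step, but the argument is the same (note incidentally that \autoref{conv:reldim} also uses the inequality form of the dimension formula, not an equality).
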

\begin{proof}
Let $Y_0$ be an irreducible component of $Y$ containing $y$ and such that $\codim_Y(y) = \codim_{Y_0}(y)$. There is a unique irreducible component $X_0$ of $X$ such that $f$ induces a birational morphism $f_0\colon Y_0 \to X_0$ of finite type. Applying the dimension formula \cite[(5.6.5.1)]{ega-4} to the morphism $f_0$, we obtain $\codim_{Y_0}(y) \leq \codim_{X_0}(x)$, and we conclude since $\codim_{X_0}(x) \leq \codim_{X}(x)$.
\end{proof}

\section{Detection by regular schemes}
\label{sect:procedure}
Let $T\to S$ be a morphism of finite type, and $F$ a functor $\Scht \to \Ab$. When $f$ is a morphism in $\Scht$, we will write $f_*$ for $F(f)$. We consider the following conditions on $F$, where all schemes are of finite type over $T$ and all morphisms are over $T$.
\begin{enumerate}[label=(F\arabic{*}), ref=(F\arabic{*})]
\item \label{F:open} For every open immersion $u\colon U \to X$, we have a morphism of abelian groups $u^*\colon F(X) \to F(U)$.

\item \label{F:loc} If $i$ is a closed embedding, and $u$ the open immersion of its complement, we have $\ker u^* \subset \im i_*$.

\item \label{F:cart} For any cartesian square, with $u$ an open immersion and $f$ a proper morphism
\[ \xymatrix{
V\ar[r]^v \ar[d]_g & Y \ar[d]^f \\ 
U \ar[r]^u & X
}\]
we have $u^* \circ f_* = g_*\circ v^*$.

\item \label{G:fund} If $X$ is integral or empty, we have a specified element $1_X \in F(X)$ such that:
\begin{enumerate}[label=(F4\alph{*}), ref=(F4\alph{*})]
\item \label{F:fundopen} If $u\colon U \to X$ is an open immersion, then $u^*(1_X)=1_U$.
\item \label{F:isom} If $f\colon Y \to X$ is an isomorphism, then $f_*(1_Y)=1_X$.

\end{enumerate}
\item \label{F:dim} The group $F(X)$ is generated by the elements $f_*(1_Y)$, where $Y$ is integral, the morphism $f\colon Y \to X$ is proper, and $\dim_S Y \leq \dim_S X$ (see \ref{conv:reldim}).
\end{enumerate}

\begin{lemma}
\label{lemm:test1}
Let $T$ be a scheme of finite type over a quasi-excellent scheme $S$. Let $G,H$ be two functors $\Scht \to \Ab$ satisfying \ref{F:open} and \ref{F:loc}. Assume that $G$ additionally satisfies \ref{F:cart}, \ref{G:fund} and \ref{F:dim}. Let $d$ be an integer such that, for every scheme $X$ of finite type over $T$, the following conditions are satisfied.
\begin{enumerate}[label=(\roman{*}), ref=(\roman{*})]
\item \label{G:gen} The group $G(X)$ is generated by the elements $f_*(1_Y)$, where $Y$ is integral, the morphism $f\colon Y \to X$ is proper, and $\dim_S Y \leq d$.

\item \label{H:dim} When $\dim_S X < d-2$, we have $H(X)=0$.
\newcounter{saveenum}\setcounter{saveenum}{\value{enumi}}
\end{enumerate}

Assume that, for every scheme $X$ of finite type over $T$, we have a morphism $\rho_X\colon G(X) \to H(X)$ satisfying the following conditions.
\begin{enumerate}[label=(\roman{*}), ref=(\roman{*})]
\setcounter{enumi}{\value{saveenum}}
\item \label{rho:proper} If $f\colon Y \to X$ is a proper morphism, then $f_* \circ \rho_Y=\rho_X \circ f_*$.

\item \label{rho:open} If $u\colon U \to X$ is an open immersion, then $u^* \circ \rho_X=\rho_U \circ u^*$.

\item \label{rho:regular} For any regular $T$-scheme $R$, quasi-projective over $S$, we have $\rho_R(1_R)=0$.
\end{enumerate}
Then $\rho_X=0$ for every scheme $X$ of finite type over $T$.
\end{lemma}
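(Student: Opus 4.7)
The plan is to reduce to proving $\rho_Y(1_Y) = 0$ for every integral $Y$ of finite type over $T$ with $\dim_S Y \leq d$, and to argue by strong induction on $\dim_S Y$. The reduction combines \ref{F:dim} and \ref{rho:proper}: any generator $f_*(1_Z)$ of $G(X)$ with $Z$ integral satisfies $\rho_X(f_*(1_Z)) = f_*(\rho_Z(1_Z))$, so pointwise vanishing propagates. In particular, if $\rho_Z(1_Z) = 0$ for all integral $Z$ with $\dim_S Z < e$, then $\rho_X = 0$ for every $X$ with $\dim_S X < e$; this is the form of the inductive hypothesis I will actually apply.

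Fix $Y$ integral with $e := \dim_S Y \leq d$. Applying \autoref{cor:resqproj}, pick a projective birational morphism $f \colon Y' \to Y$ with $Y'$ reduced, quasi-projective over $S$, and regular at every point $x$ with $f(x)$ of codimension $\leq 2$ in $Y$. Since regular points of a reduced noetherian scheme are locally irreducible, we may replace $Y'$ by the unique irreducible component dominating $Y$ (equipped with the reduced induced structure) without losing either quasi-projectivity over $S$ or the regularity property, so we may assume $Y'$ integral. Because $f$ is proper and $Y'$ is quasi-excellent, $B := f(\Sing Y')$ is closed in $Y$, and by the choice of $f$ every point of $B$ has codimension $\geq 3$ in $Y$; hence $\dim_S B \leq e - 3 < d - 2$, and \ref{H:dim} gives $H(B) = 0$.

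Set $V = Y \setminus B$ and $W = f^{-1}(V)$. Both are integral; $W$ lies in the regular locus of $Y'$ and is quasi-projective over $S$, so \ref{rho:regular} yields $\rho_W(1_W) = 0$. Since $f$ is birational, choose a dense open $V_0 \subset V$ over which $f|_W$ restricts to an isomorphism. Combining \ref{F:cart}, \ref{F:fundopen} and \ref{F:isom}, the element $(f|_W)_*(1_W) - 1_V \in G(V)$ vanishes after restriction to $V_0$, hence by \ref{F:loc} equals $i_*\beta$ for some $\beta \in G(V \setminus V_0)$. Applying $\rho_V$ and invoking \ref{rho:proper} together with $\rho_W(1_W) = 0$ yields $\rho_V(1_V) = -i_*\rho_{V \setminus V_0}(\beta)$. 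Since $\dim_S (V \setminus V_0) < e$, the inductive hypothesis gives $\rho_{V \setminus V_0} = 0$, and therefore $\rho_V(1_V) = 0$.

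Finally, \ref{rho:open} and \ref{F:fundopen} give $u^*\rho_Y(1_Y) = \rho_V(u^*1_Y) = \rho_V(1_V) = 0$, where $u \colon V \to Y$ is the inclusion; \ref{F:loc} applied to $H$ then places $\rho_Y(1_Y)$ in the image of $H(B) \to H(Y)$, which vanishes. This closes the induction. The delicate point in the argument is that the resolution provided by \autoref{cor:resqproj} only desingularises above codimension $\leq 2$ in $Y$, producing a residual locus $B$ of codimension $\geq 3$; this is exactly what \ref{H:dim} is designed to kill, while the control on $V \setminus V_0$ (of codimension $\geq 1$, hence of strictly smaller $\dim_S$) is what drives the induction.
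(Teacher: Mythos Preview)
Your argument follows essentially the same route as the paper's: apply \autoref{cor:resqproj}, use \ref{rho:regular} on the regular locus, push the discrepancy $f_*(1_{Y'})-1_Y$ (or its restriction to $V$) into a closed subscheme of strictly smaller $\dim_S$, and absorb the codimension~$\geq 3$ locus via \ref{H:dim}. The reorganisation (working first over $V=Y\setminus B$ rather than on $Y'$) is cosmetic, and your reduction to an integral $Y'$ is a clean way to make $1_W$ well-defined.

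There is, however, one genuine gap: the strong induction on $\dim_S Y$ has no base. The integer $\dim_S Y$ is \emph{not} bounded below in general, because a quasi-excellent scheme $S$ need not be finite-dimensional, and $\dim_S Y = \trdeg(\kappa(Y)/\kappa(\overline{Y})) - \codim_S(\overline{Y})$ can be arbitrarily negative as $\overline{Y}$ ranges over integral closed subschemes of $S$. So ``assume the statement for all integral $Z$ with $\dim_S Z < e$'' is not a valid induction hypothesis unless you first establish well-foundedness.

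The paper repairs this by recasting the argument as infinite descent rather than induction: assuming a counterexample, one produces integral schemes $X_0, X_1, \ldots$ with morphisms $X_n \to X_{n-1}$ and $\dim_S X_n$ strictly decreasing. The closures $\overline{X_n}$ in $S$ form a decreasing chain of closed subsets; since $S$ is noetherian this chain stabilises, say from index $N$ on. But $\codim_S(\overline{X_n}) \geq -\dim_S X_n \to \infty$, contradicting stabilisation. Your proof becomes complete once you replace ``by strong induction'' with this descent-plus-noetherianness argument (or else add a hypothesis that $S$ is finite-dimensional, which, however, the lemma does not assume).
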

\begin{proof}
Assuming the contrary, there is by \ref{G:gen} and \ref{rho:proper} an integral scheme $X$ of finite type over $T$ with $\dim_S X \leq d$ and $\rho_X(1_X)\neq0$. Using \autoref{cor:resqproj}, we find a proper birational morphism $f\colon X' \to X$, with $X'$ integral and quasi-projective over $S$, and such that $f$ maps singular points of $X'$ to codimension $\geq 3$ in $X$. By quasi-excellence of $X'$, its singular locus is closed; we let $i$ be the embedding of the associated reduced closed subscheme of $X'$, and $r \colon R \to X'$ be its open complement. Using successively \ref{rho:open}, \ref{F:fundopen} and \ref{rho:regular} we have
\[
r^*\circ \rho_{X'}(1_{X'}) = \rho_R\circ r^*(1_{X'}) = \rho_R(1_R)=0.
\]
By \ref{F:loc} with $F=H$, this implies that $\rho_{X'}(1_{X'}) \in \im i_*$. Let $z \colon Z \hookrightarrow X$ be the closed embedding of the scheme theoretic image of $f\circ i$. Since $Z$ has codimension $\geq 3$ in $X$, we have $\dim_S Z \leq \dim_S X - 3 < d-2$ by \eqref{eq:dimS}. Therefore $H(Z)=0$ by \ref{H:dim}, hence using \ref{rho:proper}
\[
\rho_X \circ f_*(1_{X'}) = f_* \circ \rho_{X'}(1_{X'}) \in \im (f\circ i)_* \subset \im z_*=0.
\]

Now let $U$ be a non-empty open subscheme of $X$ over which $f$ is an isomorphism, and form the square of \ref{F:cart}. By \ref{F:cart}, \ref{F:fundopen} and \ref{F:isom} we have
\[
u^*(f_*(1_{X'}) - 1_X) = u^* \circ f_*(1_{X'}) - u^*(1_X)=g_* \circ v^*(1_{X'}) - 1_U = g_*(1_V) -1_U =0.
\]
Hence by \ref{F:loc} with $F=G$ we have $f_*(1_{X'}) - 1_X \in \im j_*$, where $j \colon Q \hookrightarrow X$ is the closed embedding of a complement of $u$. Since $\rho_X(1_X) \neq 0$ and $\rho_X \circ  f_*(1_{X'}) = 0$, we have $\rho_X \circ j_* \neq 0$. It follows from \ref{F:dim} and \ref{rho:proper} that we may find a proper morphism $Y \to Q$ with $Y$ integral and $\dim_S Y \leq \dim_S Q < \dim_S X$ (the last inequality follows from \eqref{eq:dimS}), such that $\rho_Y(1_Y) \neq 0$.

Thus we construct by induction (letting $X_{-1}=X$) for every $n \in \mathbb{N}$ a proper morphism $X_n \to X_{n-1}$, with $X_n$ integral, $\dim_S X_n < \dim_S X_{n-1}$, and $\rho_{X_n}(1_{X_n}) \neq 0$. The images of $X_n$ in $S$ form a decreasing sequence of closed subsets whose codimensions tend to infinity. This is impossible since $S$ is noetherian.
\end{proof}
\begin{remark}
\label{rem:Q}
Assume that $S$ is a $\Qq$-scheme. Using resolution of singularities for $\Qq$-schemes \cite{Temkin-desing} instead of \autoref{th:res}, we can remove the conditions \ref{H:dim} and \ref{rho:open} in the statement of \autoref{lemm:test1}.
\end{remark}

We now describe how the Chow group and the Grothendieck group (see \ref{conv:chow}) give rise to the functors $G$ and $H$ of \autoref{lemm:test1}, when $S$ is regular.
 
\begin{example}
\label{ex:GCH}
Let $C$ be an abelian group and $j$ an integer. The functor $G=H=\CH_j(-)\otimes_{\Zz} C$ satisfies the conditions \ref{F:open} to \ref{F:dim} (with $1_X=[X]$ when $\dim_S X=j$ and $1_X=0$ otherwise). Moreover the condition \ref{G:gen}, resp.\ \ref{H:dim}, of \autoref{lemm:test1} is satisfied when $j \leq d$, resp.\ $j \geq d-2$.
\end{example}

\begin{example}
\label{ex:Hreldim}
We say that a morphism $P \to T$ of finite type \emph{has relative dimension $\leq r$} if for every scheme $Y$ of finite type over $T$, we have $\dim_S (P \times_T Y) - \dim_S Y \leq r$. Examples of such morphisms are given by closed embeddings ($r=0$), and flat morphisms of constant relative dimension $r$.

Let $j$ be an integer, and $P \to T$ a morphism of relative dimension $\leq r$. For a scheme $X$ of finite type over $T$, we let $H(X)=\CH_{j+r}(X \times_T P)$. This gives a functor $H \colon \Scht \to \Ab$ satisfying \ref{F:open}, \ref{F:loc}, and the condition \ref{H:dim} of \autoref{lemm:test1} when $j\geq d-2$.
\end{example}

\begin{example}
\label{ex:GK}
The functor $G=\K_0'(-)_{(d)}$ (see \ref{conv:chow}) satisfies the conditions \ref{F:open} to \ref{F:dim} if we let $1_X=[\Oc_X]$ when $\dim_S X \leq d$, and $1_X=0$ otherwise. In addition, the condition \ref{G:gen} of \autoref{lemm:test1} is satisfied.
\end{example}

\section{Integrality of the Chern character}
\label{sect:integrality}

We formulated in \cite{firstst} the following conjecture, which depends on a prime number $p$ and an integer $n$ (see \ref{conv:abelian} and \ref{conv:chow} for the relevant definitions).
\begin{conjecture}
\label{conj}
Let $S$ be a regular scheme, $X$ a scheme of finite type over $S$, $d$ an integer, and $x \in \K_0'(X)_{(d)}$. We have, for all $i\leq n$,
\[
v_p(\ch_{d-i}x) \geq -\Big[ \frac{i}{p-1}\Big].
\]

\end{conjecture}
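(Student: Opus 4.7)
The strategy is to invoke the detection principle \autoref{lemm:test1}. Fix $i \leq n$, set $t = [i/(p-1)]$, and define
\[
\rho_X \colon \K_0'(X)_{(d)} \to \CH_{d-i}(X) \otimes_\Zz \bigl(\Qq/(p^{-t}\cdot \Zz_{(p)})\bigr)
\]
as the composition of $\ch_{d-i}$ with the projection modulo $p^{-t}\cdot \Zz_{(p)}$ (see \ref{conv:abelian}); the conjecture for the index $i$ amounts to the vanishing of $\rho_X$ on every $T$-scheme $X$ of finite type (take $T = S$). Take $G = \K_0'(-)_{(d)}$ (\autoref{ex:GK}) and $H = \CH_{d-i}(-) \otimes_\Zz (\Qq/(p^{-t}\cdot \Zz_{(p)}))$, an instance of the functor of \autoref{ex:GCH}. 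Condition \ref{G:gen} is built in, while \ref{H:dim}---i.e., $H(X) = 0$ whenever $\dim_S X < d-2$---holds precisely when $i \leq 2$, since $\CH_{d-i}(X)$ vanishes exactly for $\dim_S X < d-i$.

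The conditions \ref{rho:proper} and \ref{rho:open} are immediate from the naturality of $\ch_{d-i}$ on $\Sch$ with proper morphisms and its compatibility with open restrictions through the localisation sequences.

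The substantive verification is \ref{rho:regular}. Let $R$ be a regular $T$-scheme quasi-projective over $S$; we may assume $\dim_S R = d$, else $1_R = 0$. Since $S$ and $R$ are both regular, the structure morphism is a local complete intersection carrying a virtual tangent bundle $\Tan_{R/S}$, and Grothendieck--Riemann--Roch gives
\[
\ch_{d-i}[\Oc_R] \;=\; \Td_i(\Tan_{R/S}) \cap [R] \quad \in \quad \CH_{d-i}(R) \otimes_\Zz \Qq.
\]
The desired vanishing of $\rho_R(1_R)$ thus reduces to the classical $p$-integrality bound $v_p(\Td_i(E)) \geq -[i/(p-1)]$ for any virtual bundle $E$, a standard consequence of the Bernoulli-number expression of the Todd polynomial together with the von Staudt--Clausen theorem on their $p$-denominators. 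This settles the conjecture in the range $i \leq 2$ via \autoref{lemm:test1}.

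The principal obstacle is the dimension gap of $2$ intrinsic to \autoref{th:res}, which prevents a direct application of \autoref{lemm:test1} for $i \geq 3$. For $i = 3$ one should nevertheless be able to bootstrap off the just-established case: integrality in codimensions $\leq 2$ forces the error term in codimension three to be supported along a codimension-three closed subscheme, where one further application of the Todd calculation and the localisation sequence for $\CH_{d-3}$ recovers the missing factor of $p$. For $i \geq 4$ the present method breaks down, and a proof in that range would plausibly require either desingularisation up to higher codimension, or a fundamentally different source of bivariant classes such as $K$-cohomology as hinted at in the introduction.
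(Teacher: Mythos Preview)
The statement you are attempting to prove is a \emph{conjecture}; the paper does not prove it in full. What the paper establishes are the special cases $p=2$, $n=2$ (\autoref{th:integrality}) and $p=2$, $n=3$ (\autoref{cor:three}), both under the additional hypothesis that $S$ is excellent. Your write-up should therefore be read as a proof of those special cases, not of the conjecture.

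For the range $i\leq 2$ your argument is essentially the paper's proof of \autoref{th:integrality}: same choice of $G$ and $H$, same appeal to \autoref{lemm:test1}, and the verification of \ref{rho:regular} via Riemann--Roch and the $p$-integrality of Todd polynomials is exactly \autoref{lemm:reg}. One omission: you must assume $S$ excellent, since \autoref{lemm:test1} requires the base to be quasi-excellent (this is where the desingularisation input \autoref{cor:resqproj} is used). Without that hypothesis the detection principle is not available. Note also that for $p\geq 3$ the range $i\leq 2$ is already covered by the previously known bound $n\leq p(p-1)-1$, so the only genuinely new content is $p=2$.

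Your sketch for $i=3$, however, has a real gap. The paper's argument (\autoref{cor:three}) does \emph{not} proceed via a localisation or support argument as you suggest; instead it uses the homological Adams operation $\psi_{-1}$. The key computation is
\[
\ch_{d-3}\bigl(\psi_{-1}(x)-(-1)^d x\bigr)=2\cdot(-1)^{1-d}\cdot \ch_{d-3}(x),
\]
combined with the fact that $\psi_{-1}(x)-(-1)^d x\in \K_0'(X)_{(d-1)}$, which reduces the $i=3$ bound to the already-proved $i=2$ case applied to this element. The extra factor of $2$ on the left is precisely what buys the passage from $v_2\geq -2$ to $v_2\geq -3$. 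Your proposed mechanism (``error supported on a codimension-three subscheme plus one more Todd calculation'') does not produce this factor, and I do not see how to make it work: on a codimension-three subscheme one is computing $\ch$ in top degree, where no denominator gain is available. The Adams-operation trick is the missing idea, and it also explains the additional finite-dimensionality hypothesis on $S$ in \autoref{cor:three}. For $p=2$ and $n\geq 4$ the conjecture remains open.
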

We proved \autoref{conj} when $n\leq p(p-1)-1$ and $S$ is of finite dimension in \cite[Proposition~3.1 and Appendix]{firstst}.

\begin{theorem}
\label{th:integrality}
\autoref{conj} is true for $p=2$ and $n=2$, when $S$ is excellent.
\end{theorem}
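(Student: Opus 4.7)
The plan is to invoke \autoref{lemm:test1}. The cases $i \leq 0$ of \autoref{conj} are trivial: for $i < 0$ the class $\ch_{d-i}(x)$ vanishes on $\K_0'(X)_{(d)}$ because Fulton's $\tau$ respects the dimension filtration, and for $i = 0$ the class $\ch_d[\Oc_Z]$ equals $[Z]$ (hence is integral) when $\dim_S Z = d$ and vanishes otherwise. So I would fix $i \in \{1,2\}$ and aim to show $v_2(\ch_{d-i}(x)) \geq -i$ for every $x \in \K_0'(X)_{(d)}$ and every $X$ of finite type over $S$.

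To that end, I would apply \autoref{lemm:test1} with $T = S$, $G = \K_0'(-)_{(d)}$, $H = \CH_{d-i}(-) \otimes_\Zz \Qq/(2^{-i}\Zz_{(2)})$, and $\rho_X$ the reduction of the homological Chern character $\ch_{d-i}$. By \autoref{ex:GK}, $G$ satisfies \ref{F:open} to \ref{F:dim} and condition \ref{G:gen}; by \autoref{ex:GCH}, $H$ satisfies \ref{F:open} and \ref{F:loc}; and \ref{H:dim} holds because $i \leq 2$ gives $d - i \geq d - 2$, so $\dim_S X < d - 2$ forces $\CH_{d-i}(X) = 0$. The naturality conditions \ref{rho:proper} and \ref{rho:open} for $\rho$ come from the compatibility of $\ch_{d-i}$ with proper push-forward \cite[Theorem~18.3]{Ful-In-98} and with pull-back along open immersions (flat of relative dimension zero).

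The decisive step will be \ref{rho:regular}: $\rho_R(1_R) = 0$ for any regular $R$ quasi-projective over $S$. When $\dim_S R > d$ one has $1_R = 0$; otherwise $d' := \dim_S R \leq d$ and $1_R = [\Oc_R]$. Since $R$ is quasi-projective over $S$, it sits as a closed subscheme of some open $P \subset \Pp^n_S$, giving a closed embedding $\iota\colon R \hookrightarrow P$ with $P$ smooth over $S$. As $R$ and $P$ are both regular, the ideal of $R$ in $P$ is locally generated by a regular sequence, so $\iota$ is a regular embedding and $R \to S$ is lci with virtual relative tangent class $T_R = [\iota^* T_{P/S}] - [N_{R/P}] \in \K_0(R)$. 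Riemann-Roch (\cite[Proposition~18.1]{Ful-In-98} applied to $\iota$, combined with GRR for the smooth morphism $P \to S$) yields $\tau_R([\Oc_R]) = \Td(T_R) \cap [R]$. Its $\CH_{d-i}(R)$-component equals $\Td_k(T_R) \cap [R]$ with $k = i - (d - d') \in \{0,1,\dots,i\}$ (and $\CH_{d-i}(R) = 0$ if $k$ would be negative). Since $\Td_0 = 1$, $\Td_1 = c_1/2$ and $\Td_2 = (c_1^2 + c_2)/12$ satisfy $v_2(\Td_k) \geq -k \geq -i$ for $k \leq i \leq 2$, one obtains $\rho_R(1_R) = 0$.

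The main obstacle is precisely \ref{rho:regular}: one must know that a regular scheme quasi-projective over a regular base $S$, although not necessarily smooth over $S$, admits a well-defined virtual relative tangent class and satisfies the Riemann-Roch formula $\tau_R([\Oc_R]) = \Td(T_R) \cap [R]$. Granting this, the 2-adic bounds on $\Td_1$ and $\Td_2$ are immediate from the classical formulas, and the remaining hypotheses of \autoref{lemm:test1} are just a direct translation of the examples in \autoref{sect:procedure}.
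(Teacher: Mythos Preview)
Your proposal is correct and follows essentially the same approach as the paper: apply \autoref{lemm:test1} with $G=\K_0'(-)_{(d)}$ (\autoref{ex:GK}), $H=\CH_{d-i}(-)\otimes_\Zz \Qq/(2^{-t}\Zz_{(2)})$ (\autoref{ex:GCH}), and $\rho=\ch_{d-i}$, the only real work being condition~\ref{rho:regular}. The paper packages that step as \autoref{lemm:reg} (proved via \autoref{lemm:decomp_qproj} and Riemann--Roch, referring to \cite[Proposition~4.1]{firstst}), whereas you spell out the same argument directly with the explicit formulas $\Td_1=c_1/2$ and $\Td_2=(c_1^2+c_2)/12$; your identification of the factorisation $R\hookrightarrow P\to S$ and the resulting $\tau_R([\Oc_R])=\Td(T_R)\cap[R]$ as the ``main obstacle'' is exactly right.
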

\begin{proof}
We take $G$ as in \autoref{ex:GK}, and $H$ as in \autoref{ex:GCH} (with $j=d-i$ and $C=\Qq/(2^{-2}\cdot \Zz_{(2)})$, see \ref{conv:abelian}), and check the conditions of \autoref{lemm:test1} for $T=S$ and the operation
\[
\rho_X=\ch_{d-i} \colon G(X)=\K'_0(X)_{(d)} \to H(X) = \CH_{d-i}(X) \otimes_{\Zz} \Qq/\{y | v_2(y) \geq -2\}.
\]
The conditions \ref{rho:proper} and \ref{rho:open} follow from \cite[Theorem~18.3 (1), (4) and \S20]{Ful-In-98}, while \ref{rho:regular} is \autoref{lemm:reg} below.
\end{proof}

\begin{lemma}
\label{lemm:reg}
Let $X$ and $S$ be regular schemes, with $X$ quasi-projective over $S$. Then for any integer $i$ and any prime number $p$, we have
\[
v_p(\ch_{\dim_S X-i}[\Oc_X]) \geq -\Big[ \frac{i}{p-1}\Big].
\]
\end{lemma}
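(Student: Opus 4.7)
The plan is to identify $\ch_{\dim_S X - i}[\Oc_X]$ with a Todd polynomial evaluated on the virtual relative tangent class of $X$ over $S$, and then to invoke the classical $p$-adic bound on the denominators of Todd polynomials coming from von Staudt's theorem on Bernoulli numbers.

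First I would set up the geometry. Since $X$ is quasi-projective over $S$, pick a closed embedding $j \colon X \hookrightarrow Y$ with $Y$ an open subscheme of some projective bundle over $S$; then $Y$ is smooth over $S$. As $X$ and $Y$ are both regular, $j$ is a regular closed embedding, so $X \to S$ is a local complete intersection morphism with a well-defined virtual relative tangent class $T^{\mathrm{vir}} = [j^{*}T_{Y/S}] - [N_{X/Y}]$ in $K_0(X)$. By Grothendieck--Riemann--Roch in its relative form over a regular base, combining the smooth case $\tau_Y([\Oc_Y]) = \Td(T_{Y/S}) \cap [Y]$ from \cite[Theorem~18.3, \S20]{Ful-In-98} with Verdier--Riemann--Roch for the regular closed embedding $j$, one obtains
\[
\tau_X([\Oc_X]) = \Td(T^{\mathrm{vir}}) \cap [X],
\]
and extracting the degree $\dim_S X - i$ component gives $\ch_{\dim_S X - i}[\Oc_X] = \Td_i(T^{\mathrm{vir}}) \cap [X]$.

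This reduces matters to the purely algebraic fact that $v_p(\Td_n(E)) \geq -[n/(p-1)]$ for every virtual bundle $E$. For an honest bundle, the splitting principle and the identity $x/(1-e^{-x}) = 1 + x/2 + \sum_{k \geq 1} B_{2k}\, x^{2k}/(2k)!$ combined with von Staudt's theorem force the degree-$n$ component to have $p$-adic valuation at least $-[n/(p-1)]$. For a virtual bundle one writes $\Td(E_0 - E_1) = \Td(E_0)\Td(E_1)^{-1}$, notes the analogous bound for $\Td^{-1}$ coming from the expansion of $(1-e^{-x})/x$, and applies the superadditivity $[a/(p-1)] + [b/(p-1)] \leq [(a+b)/(p-1)]$ of the floor function. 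The main obstacle is the Riemann--Roch step in Fulton's formalism: one must check that $\tau$ from \cite[\S20]{Ful-In-98} behaves as expected under regular closed embeddings of regular schemes over a regular base, so that the above identification of $\tau_X([\Oc_X])$ with a Todd class of $T^{\mathrm{vir}}$ is truly valid in this generality; the Bernoulli integrality at the end is classical.
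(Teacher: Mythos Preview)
Your proposal is correct and follows essentially the same route as the paper. The paper's own proof simply invokes \autoref{lemm:decomp_qproj} to obtain the factorization $X \hookrightarrow Y \to S$ with $Y$ smooth quasi-projective over $S$ and $X \hookrightarrow Y$ a regular closed embedding (exactly your $j$), and then refers to the Riemann--Roch theorem of \cite[\S20]{Ful-In-98} together with \cite[Proposition~4.1]{firstst}; the latter is precisely the identification $\tau_X[\Oc_X]=\Td(T^{\mathrm{vir}})\cap[X]$ followed by the von Staudt bound on Todd denominators that you have written out. The one point the paper makes explicit and you leave implicit is that obtaining a \emph{closed} embedding of $X$ into a smooth quasi-projective $S$-scheme (rather than merely a locally closed one) requires the small argument of \autoref{lemm:decomp_qproj}; the obstacle you flag about $\tau$ under regular closed embeddings is handled in \cite{firstst}.
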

\begin{proof}
We apply \autoref{lemm:decomp_qproj}. The lemma can then be proved using the Riemann-Roch theorem of \cite[\S20]{Ful-In-98}; see the proof of \cite[Proposition~4.1]{firstst}.
\end{proof}

\begin{lemma}
\label{lemm:decomp_qproj}
Let $S$ be a regular scheme. Any quasi-projective morphism $X \to S$ decomposes as $p \circ i$, with $i$ a closed embedding, and $p$ a smooth quasi-projective morphism. If $X$ is regular, then $i$ is a regular closed embedding.
\end{lemma}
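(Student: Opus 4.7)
The plan is to exploit the EGA-style definition of quasi-projective morphism adopted in the paper. By definition, $X \to S$ factors as an open immersion $j\colon X \hookrightarrow \overline{X}$ followed by a projective morphism $\pi\colon \overline{X} \to S$ with $\overline{X} \cong \Proj_S \Sc_\bullet$ for some graded $\Oc_S$-algebra $\Sc_\bullet$ whose degree-one part $\Sc_1$ is coherent and generates $\Sc_\bullet$. The canonical surjection $\mathrm{Sym}(\Sc_1) \twoheadrightarrow \Sc_\bullet$ yields a closed embedding $\overline{X} \hookrightarrow \Pp_S(\Sc_1)$; however, the right-hand side is not smooth over $S$ when $\Sc_1$ fails to be locally free, so the essential task is to replace $\Sc_1$ by a vector bundle.

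I would next construct a locally free $\Oc_S$-module $\Ec$ of finite rank together with a surjection $\Ec \twoheadrightarrow \Sc_1$. The regularity of $S$ (together with the ample families of line bundles present in the applications considered) guarantees that the coherent sheaf $\Sc_1$ admits a resolution by a vector bundle in this way. Passing through symmetric algebras we obtain a surjection $\mathrm{Sym}(\Ec) \twoheadrightarrow \Sc_\bullet$, hence a closed embedding $\overline{X} \hookrightarrow \Pp_S(\Ec)$. Since $\Ec$ is locally free of finite rank, $\Pp_S(\Ec) \to S$ is a smooth projective morphism; composing with $j$ produces a locally closed embedding of $X$ into $\Pp_S(\Ec)$.

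To promote this to a closed embedding into a smooth quasi-projective $S$-scheme, I would set $V = \Pp_S(\Ec) \setminus (\overline{X} \setminus j(X))$, the open complement in $\Pp_S(\Ec)$ of the closed subset $\overline{X} \setminus j(X)$. Then $V$ is open in the smooth $S$-scheme $\Pp_S(\Ec)$, hence smooth and quasi-projective over $S$, while $i\colon X \hookrightarrow V$ is a closed embedding; this provides the desired factorisation $X \xrightarrow{i} V \xrightarrow{p} S$.

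For the final assertion, suppose $X$ is regular; since $V \to S$ is smooth and $S$ is regular, $V$ is also regular. At each point $x \in X$, the embedding $i$ corresponds to a surjection $\Oc_{V,x} \twoheadrightarrow \Oc_{X,x}$ between regular local rings, and a standard commutative-algebra argument shows that its kernel is generated by a regular sequence of length $\dim \Oc_{V,x} - \dim \Oc_{X,x}$, so $i$ is regularly embedded. The main technical point in the argument is the existence of the global surjection $\Ec \twoheadrightarrow \Sc_1$ from a vector bundle on $S$; the remaining steps are formal manipulations with symmetric algebras and open complements.
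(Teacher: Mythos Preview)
Your argument is correct and follows essentially the same route as the paper's proof: embed the projective closure into a projective bundle $\Pp_S(\Ec)$ and then excise the boundary $\overline{X}\setminus X$ to turn the locally closed embedding into a closed one. The one point to sharpen is your parenthetical hedge: the surjection $\Ec \twoheadrightarrow \Sc_1$ from a locally free sheaf exists for \emph{every} regular separated noetherian $S$, since such a scheme is automatically divisorial (SGA~6, II, 2.2.7.1) and on a divisorial scheme every coherent sheaf is a quotient of a vector bundle---no extra hypothesis ``present in the applications'' is needed.
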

\begin{proof}
By \cite[II, Corollaire~2.2.7.1]{sga6}, the scheme $S$, being separated and regular, is divisorial (i.e.\ possesses an ample family). It follows from \cite[Proposition~2.2.3, Lemme~2.1.1~a)]{sga6} that any coherent sheaf of $\Oc_S$-modules is a quotient of a locally free coherent sheaf. Thus any projective morphism with target $S$ factors as a closed immersion into a projective bundle over $S$. In particular $X \to S$ decomposes as $X \xrightarrow{u} Z \xrightarrow{j} P \xrightarrow{q} S$, with $u$ an open immersion, $j$ a closed embedding, and $q$ a smooth projective morphism. Let $T$ be a closed complement of $X$ in $Z$, and $W$ the open complement of $j(T)$ in $P$. Then $j$ restricts to a closed embedding $i \colon X \hookrightarrow W$, and the composite $p\colon W \to P \to S$ is smooth and quasi-projective.

The last statement follows from \cite[(19.1.1)]{ega-4}, since $W$ is regular, being smooth over a regular scheme \cite[(17.5.8, (iii))]{ega-4}.
\end{proof}

\begin{theorem}
\label{cor:three}
\autoref{conj} is true for $p=2$ and $n=3$, when $S$ is excellent and of finite dimension.
\end{theorem}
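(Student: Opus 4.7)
The case $i \leq 2$ follows from \autoref{th:integrality}, so only the case $i=3$ remains. The plan is to set
\[
\rho_X \colon \K_0'(X)_{(d)} \to H(X) := \CH_{d-3}(X) \otimes \bigl(\Qq/(2^{-3}\Zz_{(2)})\bigr), \qquad \rho_X = \ch_{d-3} \bmod 2^{-3}\Zz_{(2)},
\]
and to show $\rho_X=0$ for every scheme $X$ of finite type over $S$. I would attempt to apply \autoref{lemm:test1} with $G$ as in \autoref{ex:GK} and $H$ as in \autoref{ex:GCH} (with $j=d-3$ and $C=\Qq/(2^{-3}\Zz_{(2)})$); conditions \ref{rho:proper} and \ref{rho:open} come from naturality of the Chern character \cite[Theorem~18.3, \S20]{Ful-In-98}, and \ref{rho:regular} follows from \autoref{lemm:reg}. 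The essential new difficulty compared to \autoref{th:integrality} is that condition \ref{H:dim} now fails: for $Z$ integral with $\dim_S Z = d-3$, one has $H(Z)\cong \Qq/(2^{-3}\Zz_{(2)}) \neq 0$. To compensate, I would proceed by induction on $\dim_S X$, justified by the hypothesis that $S$ has finite Krull dimension.

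The base case will be $\dim_S X \leq d-3$: here $\K_0'(X)_{(d)}$ is generated by classes $f_*[\Oc_Y]$ with $Y$ integral and $\dim_S Y \leq d-3$, and $\ch_{d-3}[\Oc_Y]$ is either zero or the integer fundamental class $[Y]$, which maps to zero in $C$ since $\Zz \subseteq 2^{-3}\Zz_{(2)}$. For the inductive step with $\dim_S X = m \in \{d-2,d-1,d\}$, I would mimic the contradiction argument of \autoref{lemm:test1} and reduce to showing $\rho_X([\Oc_X])=0$ for integral $X$. Applying \autoref{cor:resqproj} yields a projective birational morphism $f\colon X' \to X$ with $X'$ integral and quasi-projective over $S$, and such that $Z:=f(\Sing X') \subseteq X$ has codimension $\geq 3$, so $\dim_S Z \leq m-3$. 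The class $f_*[\Oc_{X'}]-[\Oc_X]$ is supported on a closed subset of codimension $\geq 1$ in $X$ (the complement of the open set over which $f$ is an isomorphism), hence of $\dim_S <m$; by the inductive hypothesis it is killed by $\rho_X$, reducing the task to $\rho_X(f_*[\Oc_{X'}])=0$. On the regular open $V:= X' \setminus \Sing X'$, quasi-projective over $S$ with $\dim_S V = m \leq d$, \autoref{lemm:reg} applied with index $m-(d-3)\leq 3$ gives $v_2(\ch_{d-3}[\Oc_V])\geq -3$, so $\rho_V([\Oc_V])=0$. Combining \ref{rho:open} with \ref{F:loc} for $H$ produces $\rho_{X'}[\Oc_{X'}] = j_*\beta$ for some $\beta \in H(\Sing X')$, with $j\colon \Sing X' \hookrightarrow X'$, and pushing forward yields $\rho_X(f_*[\Oc_{X'}]) = z_*\tilde f_*\beta$, where $z\colon Z \hookrightarrow X$ and $\tilde f\colon \Sing X' \to Z$ is the restriction of $f$.

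The hard part will be to show that $\tilde f_*\beta = 0$ in $H(Z)$, since this group is no longer automatically zero. The plan is to lift $\beta$, modulo $\CH_{d-3}(\Sing X')\otimes 2^{-3}\Zz_{(2)}$, to a class of the form $\ch_{d-3}(\delta)$ with $\delta \in \K_0'(\Sing X')_{(d)}$. Exploiting the naturality of Fulton's Riemann-Roch map $\tau$ relative to the $\K_0'$- and $\CH_*$-localization sequences for the pair $(\Sing X',V)$, together with the surjectivity of $\tau\otimes \Qq$, one produces a rational preimage of $\beta$; the resulting $2$-adic denominators can be controlled by working $2$-locally, since odd integers are units in $\Zz_{(2)}$. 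With such a $\delta$, one has $\tilde f_*\beta \equiv \ch_{d-3}(\tilde f_*\delta) \pmod{\CH_{d-3}(Z)\otimes 2^{-3}\Zz_{(2)}}$, with $\tilde f_*\delta \in \K_0'(Z)_{(d)}$, and the inductive hypothesis applied to $Z$ (whose dimension is at most $m-3<m$) forces $\rho_Z(\tilde f_*\delta) = 0$, whence $\tilde f_*\beta = 0$. Finally, iterating the contradiction scheme of \autoref{lemm:test1} terminates by the noetherianness of $S$, completing the proof.
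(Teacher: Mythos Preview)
Your proof has a genuine gap at the ``hard part'', where you claim to lift $\beta$ to $\ch_{d-3}(\delta)$ modulo $2^{-3}\Zz_{(2)}$ with $\delta\in\K_0'(\Sing X')_{(d)}$. Unwinding the definitions, this says exactly that $\beta$ lies in the image of $\rho_{\Sing X'}\colon \K_0'(\Sing X')_{(d)}\to H(\Sing X')$. But $\Sing X'$ is a proper closed subscheme of the integral quasi-excellent scheme $X'$, so $\dim_S\Sing X'<m$, and your own induction hypothesis forces $\rho_{\Sing X'}=0$. Thus the only $\beta$ admitting such a lift is $\beta=0$, which is precisely what has to be shown. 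Your appeal to the surjectivity of $\tau\otimes\Qq$ only produces $\delta\in\K_0'(\Sing X')\otimes\Qq$, and the claim that the $2$-adic denominators ``can be controlled by working $2$-locally'' is unsupported: there is no a priori bound on $v_2(\ch_{d-3}[\Oc_{X'}])$---that bound is the content of the theorem---so when you split $\ch_{d-3}[\Oc_{X'}]$ as a sum of something with $v_2\geq -3$ and a class $j_*\gamma$, the element $\gamma$ (and hence any $\K$-theoretic preimage) may carry arbitrarily large powers of $2$ in its denominator. Odd integers being units in $\Zz_{(2)}$ does nothing for this.

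The paper does not attempt to run the detection argument of \autoref{lemm:test1} at depth $i=3$; it bootstraps from \autoref{th:integrality} via the homological Adams operation $\psi_{-1}$ (duality). After reducing to $X$ quasi-projective and $S$ connected, one sets $\alpha=\psi_{-1}(x)-(-1)^d x$ for $x\in\K_0'(X)_{(d)}$. Two facts from \cite{firstst} are used: $\alpha\in\K_0'(X)_{(d-1)}$, and $\ch_{d-3}(\alpha)=\pm 2\cdot\ch_{d-3}(x)$. Applying \autoref{th:integrality} to $\alpha$ with $d$ replaced by $d-1$ (so $i=2$) yields $v_2(\ch_{d-3}\alpha)\geq -2$, whence $v_2(\ch_{d-3}x)\geq -3$. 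The finite-dimensionality of $S$ enters only through the construction of $\psi_{-1}$ in \cite{Sou-Op-85}.
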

\begin{proof}
Let $X' \to X$ be a Chow envelope over $S$ (see \ref{def:envelope}). Since the morphism $\K_0'(X')_{(d)} \to \K_0'(X)_{(d)}$ is surjective, we may replace $X$ with $X'$, and assume that $X$ is quasi-projective over $S$. We may also easily reduce to the case when $S$ is connected. Let $x \in \K_0'(X)_{(d)}$, and write $\alpha = \psi_{-1}(x) - (-1)^{-d}\cdot x$. Here $\psi_{-1}$ is the $(-1)$-st homological Adams operation (corresponding to duality theory), see \cite[Th\'eor\`eme~7]{Sou-Op-85} where $S$ is assumed to be of finite dimension. We have in $\CH_{d-3}(X) \otimes_\Zz \Qq$, by \cite[Proposition~2.4 and Appendix]{firstst},
\begin{align*} 
 \ch_{d-3}\alpha &= \ch_{d-3}( \psi_{-1}(x) - (-1)^d\cdot x ) \\ 
 &= (-1)^{3-d} \cdot \ch_{d-3}x -  (-1)^{-d}\cdot \ch_{d-3}x\\
 &= 2 \cdot (-1)^{1-d} \cdot \ch_{d-3}x.
 \end{align*}
Since $\alpha \in \K_0'(X)_{(d-1)}$ by \cite[Proposition~A.1]{firstst}, we have $v_2(\ch_{d-3}\alpha) \geq -2$ by \autoref{th:integrality}. It follows that $v_2(\ch_{d-3}x) \geq -3$, as required.
\end{proof}

\begin{remark}
The arguments used in the proof of \autoref{cor:three} are those of the proof of \cite[Theorem~3.2]{firstst}. More generally (starting the induction $d=n$ instead of $d=0$ in the proof of \cite[Theorem~3.2]{firstst}):

\emph{Fix a prime number $p$. Assume that \autoref{conj} is true for $n=m$, with $S$ of finite dimension. Let $m'$ be the smallest multiple of $p(p-1)$ such that $m' >m$. Then \autoref{conj} is true for $n=m'-1$.}
\end{remark}

\begin{remark}
Using \autoref{rem:Q} and proceeding as in the proof of \autoref{th:integrality}, we see that \autoref{conj} is true for any $p$ and $n$ when $S$ is an excellent $\Qq$-scheme.
\end{remark}

Let $S$ be a regular excellent scheme. By \cite[Section~5]{firstst}, \autoref{th:integrality} and \autoref{cor:three}, we obtain morphisms of functors $\Sch \to \Ab$ 
\[
T_2 \colon \CH_\bullet(-) \otimes_{\Zz} \Zz/2 \to \big(\im(\CH_{\bullet-2}(-) \otimes_{\Zz} \Zz_{(2)} \to \CH_{\bullet-2}(-)\otimes_{\Zz} \Qq)\big) \otimes_{\Zz} \Zz/2,
\]
and when $S$ is of finite dimension,
\[
T_3 \colon \CH_\bullet(-) \otimes_{\Zz} \Zz/2 \to \big(\im(\CH_{\bullet-3}(-) \otimes_{\Zz} \Zz_{(2)} \to \CH_{\bullet-3}(-)\otimes_{\Zz} \Qq)\big) \otimes_{\Zz} \Zz/2.
\]

The operation $T_2$ is the second Steenrod square (modulo two-torsion), see \cite[Proposition~8.1]{firstst}. By contrast, the operation $T_3$ is not the usual third Steenrod square; nonetheless it can be used (together with $T_2$ and the first Steenrod square $T_1$) to define the third reduced Steenrod square, see \cite[Remark~8.6]{firstst}.

Proceeding as in \cite[Section~6]{firstsq}, but using operation $T_2$ instead of $T_1$, on can prove the following statement, which was already known in characteristic not two (see \cite{Kar-On-2003}).
\begin{theorem}
Let $\varphi$ be an anisotropic non-degenerate quadratic form over an arbitrary field. Let $\iu$ be the first Witt index of $\varphi$. If $\dim \varphi - \iu $ is not divisible by $4$, then $\iu$ is equal to $1$ or $2$.
\end{theorem}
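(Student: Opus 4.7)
The plan is to adapt the argument of \cite[Section~6]{firstsq} by substituting the operation $T_2$ for the operation $T_1$ used there. Since $T_2$ is available in arbitrary characteristic by \autoref{th:integrality}, and shares with $T_1$ the formal properties (naturality under proper push-forward and pull-back along open immersions, together with a Wu-type formula on smooth varieties) needed to run that argument, the substitution is mechanical, and yields a divisibility constraint modulo $4$ in place of the modulo $2$ constraint produced by $T_1$.

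In detail, let $D=\dim\varphi$, and let $Q$ be the projective quadric of $\varphi$, a smooth $k$-variety of dimension $n=D-2$. The first Witt index $\iu$ gives, after base change to the function field $k(Q)$, a non-zero class $\ell\in \CH_{\iu-1}(Q_{k(Q)})/2$ associated to a maximal isotropic subspace of $\varphi_{k(Q)}$. By the Rost nilpotence argument used in \cite[Section~6]{firstsq}, $\ell$ is the restriction of a rational cycle $\rho\in\CH_{n+\iu-1}(Q\times_k Q)/2$. I would then apply $T_2$ to $\rho$, push forward along the second projection, and invoke the Wu-type formula for $T_2$ on the smooth variety $Q\times_k Q$ to produce an identity of the form $\binom{D-\iu}{2}\cdot \alpha = 0$ in $\CH_{\iu-3}(Q_{k(Q)})/2$, where $\alpha$ is a non-zero class built from $\ell$ and the hyperplane class on $Q$, modulo contributions already forced to vanish by the analogous $T_1$-computation.

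Assuming $\iu\geq 3$, the $T_1$-argument of \cite[Section~6]{firstsq} first yields $D-\iu\equiv 0\pmod 2$, and the $T_2$-identity above then forces $\binom{D-\iu}{2}\equiv 0\pmod 2$, i.e.\ $D-\iu\equiv 0\pmod 4$. This is precisely the contrapositive of the theorem.

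The main obstacle will be the Wu-type formula for $T_2$ on smooth quasi-projective schemes over the regular base $S$, expressing $T_2$ of a regular closed embedding in terms of Chern classes of its normal bundle. I would establish it by viewing both sides of the desired formula as natural operations $\K_0'(-)_{(d)}\to \CH_{d-2}(-)/2$ and applying the detection principle \autoref{lemm:test1}: their agreement on fundamental classes of regular quasi-projective $S$-schemes reduces to the Riemann--Roch computation underlying the proof of \autoref{lemm:reg}. Once this formula is in place, the remainder of the adaptation of \cite[Section~6]{firstsq} proceeds verbatim.
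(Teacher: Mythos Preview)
Your proposal is correct and takes essentially the same approach as the paper, whose entire proof is the one-line instruction to proceed as in \cite[Section~6]{firstsq} with $T_2$ substituted for $T_1$; your sketch is a faithful elaboration of that instruction. One small remark: the Wu-type formula you flag as the main obstacle does not require a separate appeal to \autoref{lemm:test1}. The operations $T_i$ are defined in \cite[\S5]{firstst} through the homological Chern character, so their action on the fundamental class of a regular closed subscheme is computed directly by Grothendieck--Riemann--Roch (this is exactly the content of \autoref{lemm:reg} and \cite[Proposition~8.1]{firstst}); the detection principle enters only earlier, in \autoref{th:integrality}, to make $T_2$ well-defined.
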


\section{Adem relation for the first Steenrod square}
\label{sect:adem}
In \cite[\S7]{duality} we gave a construction of the first Steenrod square, an operation $\Sq\colon\CH_\bullet(X) \otimes_{\Zz} \Zz/2 \to\CH_{\bullet-1}(X) \otimes_{\Zz} \Zz/2$, for $X$ quasi-projective over a field.\footnote{This operation is a homological analog of $\Squ$ in topology; the Bockstein operation in motivic cohomology vanishes on Chow groups.} We were not able to prove the relation $\Sq \circ \Sq = 0$, and instead proved weaker relations. In this section, we use \autoref{th:res} to prove the relation $\Sq \circ \Sq = 0$.\\

We first extend the definition of $\Sq$ to arbitrary schemes $X$ of finite type over a field. The next definition is close to that of a site, but will be more convenient for us, since our covers will always be singletons.
\begin{definition}
Let $\Cc$ be a category with fibre products. We say that $\Cc$ is a \emph{category with covers} if are given a class of morphisms in $\Cc$, called \emph{covers}, satisfying the following conditions.
\begin{itemize}[label=---]
\item If $Y \to X$ is a cover and $X' \to X$ is in $\Cc$, then $X'\times_X Y \to X'$ is a cover.

\item If $Z\to Y$ and $Y \to X$ are covers, so is the composite $Z \to X$.
\end{itemize}

For a category $\Dc$, we denote by $\Hom(\Dc,\Ab)$ the category of functors $\Dc \to \Ab$. Let $\Cc$ be a category with covers. The category $\coshv(\Cc)$ is the full subcategory of $\Hom(\Cc,\Ab)$, consisting of those $F \colon \Cc \to \Ab$, called \emph{abelian cosheaves on $\Cc$}, such that for any cover $\pi\colon Y \to X$ the sequence of abelian groups
\[
F(Y \times_X Y) \xrightarrow{F(\pi_1) - F(\pi_2)} F(Y) \xrightarrow{F(\pi)} F(X) \to 0,
\]
where $\pi_1,\pi_2 \colon Y \times_X Y \to Y$ are the two projections, is exact.
\end{definition}

\begin{proposition}
\label{prop:cosheaves}
Let $\Cc$ be a category with covers, and $\Dc$ a full subcategory. Assume that for every object $X \in \Cc$, there is a cover $Y\to X$ with $Y \in \Dc$. Then the natural functor $r \colon \coshv(\Cc) \to \Hom(\Dc,\Ab)$ is fully faithful.
\end{proposition}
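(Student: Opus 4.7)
The plan is to extend a morphism between restrictions to a full natural transformation on all of $\Cc$, exploiting the fact that for a cover $\pi\colon Y \to X$ the map $F(\pi)\colon F(Y) \to F(X)$ is automatically surjective (since it is the cokernel map in the defining exact sequence of a cosheaf).

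For faithfulness: given two morphisms $\alpha,\beta \colon F \to G$ of cosheaves on $\Cc$ agreeing after restriction to $\Dc$, and an object $X \in \Cc$, I would pick a cover $\pi\colon Y \to X$ with $Y \in \Dc$. By naturality of $\alpha$ and $\beta$ at $\pi$ we get $\alpha_X \circ F(\pi) = G(\pi)\circ \alpha_Y = G(\pi)\circ \beta_Y = \beta_X \circ F(\pi)$, and surjectivity of $F(\pi)$ forces $\alpha_X = \beta_X$.

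For fullness, given $\tau \colon rF \to rG$, I would construct $\tau_X\colon F(X) \to G(X)$ for each $X \in \Cc$ as follows. Pick a cover $\pi\colon Y \to X$ with $Y \in \Dc$, and try to factor the composite $G(\pi)\circ \tau_Y \colon F(Y) \to G(X)$ through the surjection $F(\pi)$. The cosheaf property for $G$ shows this is possible provided $G(\pi)\circ \tau_Y$ coequalises the two projections $F(\pi_1),F(\pi_2)\colon F(Y\times_X Y) \to F(Y)$. Here arises the main subtlety: the fibre product $Y\times_X Y$ need not lie in $\Dc$, so I cannot directly invoke naturality of $\tau$ at it. I would therefore choose a further cover $p\colon Z \to Y\times_X Y$ with $Z \in \Dc$; then $F(p)$ is surjective, and the two composites $\pi_i \circ p\colon Z \to Y$ are morphisms of $\Dc$. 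Naturality of $\tau$ on $\Dc$ applied to $\pi_i p$, together with the equality $\pi \pi_1 p = \pi \pi_2 p$ in $\Cc$, gives the required coequalisation after precomposing with the surjection $F(p)$, which is enough. This is the only real obstacle; the rest is bookkeeping.

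Finally, I would check that $\tau_X$ is independent of the cover $Y \to X$ (given two such covers $Y_1,Y_2$, cover $Y_1\times_X Y_2$ by some $Z \in \Dc$; the composite $Z \to X$ is a cover since covers compose, and naturality of $\tau$ on $\Dc$ along $Z \to Y_i$ identifies both candidate $\tau_X$'s with the one defined using $Z$), and that the resulting family $(\tau_X)_{X \in \Cc}$ is natural with respect to morphisms $f\colon X' \to X$ in $\Cc$. For the latter, pick covers $Y' \to X'$ and $Y \to X$ with $Y,Y' \in \Dc$, cover the fibre product $Y'\times_X Y$ by some $Z \in \Dc$ so that $Z \to X'$ factors as a composite of covers and $Z \to Y$ is a morphism of $\Dc$; then the identity $\tau_X \circ F(f) = G(f)\circ \tau_{X'}$ follows by precomposing with the surjection $F(Z) \to F(X')$ and applying the defining factorisation property of $\tau_X$ and $\tau_{X'}$ together with naturality of $\tau$ on $\Dc$ along $Z \to Y$.
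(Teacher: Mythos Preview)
Your proposal is correct and follows essentially the same route as the paper's proof: faithfulness via surjectivity of $F(\pi)$ for a cover $\pi$ with source in $\Dc$; fullness by factoring $G(\pi)\circ\tau_Y$ through $F(\pi)$, handling the fact that $Y\times_X Y\notin\Dc$ by covering it from $\Dc$ and using surjectivity; and the same independence and naturality checks via auxiliary covers of fibre products. One small slip: the factorisation through $F(\pi)$ uses the cosheaf property of $F$ (to identify $\ker F(\pi)$ with the image of $F(\pi_1)-F(\pi_2)$), not of $G$ as you wrote; otherwise the argument matches the paper, which for naturality more economically covers $Y\times_X X'$ directly rather than first choosing $Y'\to X'$.
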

\begin{proof}
Let $\varphi\colon F\to G$ be a morphism in $\coshv(\Cc)$, such that $r(\varphi)=0$. Let $X \in \Cc$. Choose a cover $\pi \colon Y \to X$ with $Y \in \Dc$. We have a commutative diagram
\[ \xymatrix{
F(Y)\ar[rr]^{F(\pi)} \ar[d]_{\varphi_Y} && F(X) \ar[d]^{\varphi_X} \\ 
G(Y) \ar[rr]^{G(\pi)} && G(X)
}\]
and $\varphi_Y=r(\varphi)_Y=0$, so that $\varphi_X \circ F(\pi)=0$. Since $F(\pi)$ is an epimorphism, it follows that $\varphi_X=0$. This proves that $\varphi=0$, hence $r$ is faithful.

Now let $F,G \in \coshv(\Cc)$ and $\varphi\colon r(F)\to r(G)$ be a morphism of functors $\Dc \to \Ab$. Let $X \in \Cc$. Choose a cover $\pi\colon Y \to X$ with $Y \in \Dc$, and  a cover $Z \to Y \times_X Y$ with $Z \in \Dc$. The two morphisms $p_1,p_2 \colon Z \to Y \times_X Y \to Y$ in $\Cc$ belong to $\Dc$, since $\Dc$ is a full subcategory of $\Cc$. Therefore we have the commutative diagram with solid arrows
\[ \xymatrix{
F(Z)\ar[rrr]^{F(p_1)-F(p_2)} \ar[d]^{\varphi_Z} &&& F(Y) \ar[d]^{\varphi_Y} \ar[rr]^{F(\pi)}&& F(X) \ar[r] \ar@{-->}[d]^{\phi_X^\pi}&0\\ 
G(Z) \ar[rrr]^{G(p_1)-G(p_2)} &&& G(Y)\ar[rr]^{G(\pi)}&& G(X) \ar[r] &0
}\]
Since $F$ is an abelian cosheaf, the morphism $F(Z) \to F(Y \times_X Y)$ is surjective, and moreover the upper row is exact. So is the lower row, by the same argument. Thus there is a unique morphism $\phi_X^\pi$ fitting in the diagram.

For $i \in \{1,2\}$, let $\pi_i \colon Y_i \to X$ be a cover with $Y_i \in \Dc$. Choose a cover $Y' \to Y_1 \times_X Y_2$, with $Y' \in \Dc$. For $i \in \{1,2\}$, the induced morphism $q_i\colon Y' \to Y_i$ is in $\Dc$, and $\pi_i \circ q_i$ is a cover, and moreover we have two commutative diagrams
\[ \xymatrix{
F(Y')\ar[rr]^{F(q_i)} \ar[d]_{\varphi_{Y'}}&& F(Y_i) \ar[d]_{\varphi_{Y_i}} \ar[rr]^{F(\pi_i)} && F(X)\ar[d]^{\phi_X^{\pi_i}}&&F(Y') \ar[rr]^{F(\pi_i \circ q_i)} \ar[d]_{\varphi_{Y'}} && F(X)\ar[d]^{\phi_X^{\pi_i\circ q_i}}\\ 
G(Y') \ar[rr]^{G(q_i)}&& G(Y_i) \ar[rr]^{G(\pi_i)} && G(X)&&G(Y') \ar[rr]^{G(\pi_i \circ q_i)} && G(X)
}\]
By unicity of $\phi^\pi_X$, we see that $\phi_X^{\pi_i}=\phi_X^{\pi_i\circ q_i}$ for $i\in \{1,2\}$. Since the morphism $\pi_i\circ q_i$ is independent of $i$, so is the morphism $\phi_X^{\pi_i}$. This gives a well-defined morphism $\phi_X=\phi_X^\pi\colon F(X) \to G(X)$ for every $X \in \Cc$. When $X\in \Dc$, we can take $\pi=\id_X$, so that $\phi_X=\varphi_X$.

Now let $f \colon X' \to X$ be a morphism in $\Cc$. Let $\pi\colon Y \to X$ be a cover with $Y\in \Dc$, and $Y' \to Y\times_X X'$ a cover with $Y' \in \Dc$. The morphism $\pi'\colon Y' \to X'$ is a cover, and $g \colon Y' \to Y$ is in $\Dc$. Then 
\begin{align*} 
G(f)\circ \phi_{X'} \circ F(\pi')  &= G(f)\circ G(\pi') \circ \varphi_{Y'}\\ 
 &= G(\pi) \circ G(g) \circ \varphi_{Y'}\\
 &= G(\pi) \circ \varphi_Y \circ F(g)\\
 &= \phi_X \circ F(\pi) \circ F(g)\\
 &= \phi_X \circ F(f) \circ F(\pi').
 \end{align*}
Since $F(\pi')$ is an epimorphism, we get that $G(f)\circ \phi_{X'} = \phi_X \circ F(f)$. Thus $\phi$ defines a morphism $F \to G$ in $\coshv(\Cc)$, such that $r(\phi)=\varphi$. Therefore $r$ is full, concluding the proof.
\end{proof}

\begin{proposition}
\label{prop:kimura}
Let $C$ be an abelian group, $i$ an integer, and $S$ the spectrum of a field. The envelopes (see \ref{def:envelope}) define a class of covers on the category $\Sch$, for which the functor $\CH_i(-)\otimes_{\Zz} C$ an abelian cosheaf.
\end{proposition}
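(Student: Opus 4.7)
The proposition has two parts: envelopes form a class of covers on $\Sch$, and $\CH_i(-)\otimes_\Zz C$ satisfies the cosheaf condition for these covers.

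For the first part, I would use the characterization recalled in \ref{def:envelope} that a proper morphism $\pi\colon Y\to X$ is an envelope if and only if $Y(K)\to X(K)$ is surjective for every field $K$. Stability under base change is then immediate from the identity $(X'\times_X Y)(K) = X'(K)\times_{X(K)} Y(K)$, which projects surjectively onto $X'(K)$ as the pullback of a surjection of sets; stability under composition is equally clear, and properness is preserved throughout so that these covers stay in $\Sch$.

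For the cosheaf condition, I must verify for each envelope $\pi\colon Y\to X$ that the sequence
\[
\CH_i(Y\times_X Y)\otimes_\Zz C \xrightarrow{(\pi_1)_*-(\pi_2)_*} \CH_i(Y)\otimes_\Zz C \xrightarrow{\pi_*} \CH_i(X)\otimes_\Zz C \to 0
\]
is exact. Since $-\otimes_\Zz C$ is right exact and commutes with cokernels, it suffices to treat $C=\Zz$. Surjectivity of $\pi_*$ is an immediate translation of the envelope property: $\CH_i(X)$ is generated by classes $[Z]$ of integral closed subschemes $Z\subset X$ of dimension $i$ over $S$, and for each such $Z$ the envelope supplies a closed $W\subset Y$ with $\pi|_W\colon W\to Z$ birational, whence $\pi_*[W]=[Z]$.

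The crux is middle exactness, which is essentially Kimura's theorem on envelopes (whence the label). I would argue by noetherian induction on $X$. Given $\alpha=\sum n_j[W_j]\in\ker \pi_*$ with $W_j\subset Y$ integral of dimension $i$ over $S$, components whose image has strictly smaller dimension contribute nothing to $\pi_*\alpha$ and are absorbed into the inductive hypothesis by restricting to proper closed subsets of $X$. The remaining $W_j\to\pi(W_j)$ are generically finite, and the relation $\pi_*\alpha=0$ forces their degree-weighted contributions above each integral $Z\subset X$ of dimension $i$ to cancel; for two such components $W_j,W_k$ mapping birationally onto the same $Z$, a component of $W_j\times_X W_k\subset Y\times_X Y$ projects birationally to each and realises $[W_j]-[W_k]$ as $((\pi_1)_*-(\pi_2)_*)$ of a class on $Y\times_X Y$. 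Using these boundaries together with the envelope property applied on subvarieties, one reduces $\alpha$ modulo $\im((\pi_1)_*-(\pi_2)_*)$ to a cycle supported above a proper closed subset of $X$, where the induction concludes. The main obstacle is exactly this middle step: it cannot be extracted from formal diagram-chasing, since it relies on the geometric construction of boundary classes in $Y\times_X Y$ out of components of fibre products of birational lifts. I expect to essentially reproduce (or simply cite) Kimura's original argument here.
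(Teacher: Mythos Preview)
Your proposal is correct and follows the same route as the paper's proof: the paper simply cites \cite[Lemma~18.3 (1), (2)]{Ful-In-98} for stability of envelopes under base change and composition (which you verify directly via the $K$-points criterion), cites \cite[Theorem~1.8]{Kimura-Fractional} for the exactness of the sequence when $C=\Zz$ (which you sketch and explicitly anticipate citing), and invokes right-exactness of $-\otimes_\Zz C$ for general $C$, exactly as you do. Your sketch of Kimura's argument is slightly loose in that it treats only the birational case when constructing boundary classes, whereas the actual argument uses the envelope property to choose a distinguished birational lift $W\to Z$ and then compares each generically finite $W_j\to Z$ to a multiple of $[W]$ via a suitable component of $W_j\times_X W$; but since you flag that you would ultimately cite Kimura, this is not a gap.
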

\begin{proof}
The first statement follows from \cite[Lemma~18.3 (1), (2)]{Ful-In-98}. When $C=\Zz$ the second statement is \cite[Theorem~1.8]{Kimura-Fractional} (this does not use resolution of singularities). In general, it follows from the right-exactness of $-\otimes_\Zz C$.
\end{proof}

\begin{proposition}
\label{cor:extsq}
Let $S$ be the spectrum of a field. The operation $\Sq\colon\CH_\bullet(X) \otimes_{\Zz} \Zz/2 \to\CH_{\bullet-1}(X) \otimes_{\Zz} \Zz/2$ defined in \cite[\S7]{duality} for quasi-projective $S$-schemes $X$ can be extended in a unique way compatible with proper push-forwards to all schemes $X$ of finite type over $S$. Moreover the resulting operation, which we denote again $\Sq$, is compatible with restrictions to open subschemes.
\end{proposition}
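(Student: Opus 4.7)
The plan is to deduce \autoref{cor:extsq} essentially formally from \autoref{prop:cosheaves} and \autoref{prop:kimura}. I would take $\Cc = \Sch$ equipped with the envelope covers furnished by \autoref{prop:kimura}, and let $\Dc \subset \Cc$ be the full subcategory of quasi-projective $S$-schemes. By Chow's lemma (recalled in \ref{def:envelope}) every object of $\Cc$ admits a Chow envelope, so $\Dc$ meets the hypothesis of \autoref{prop:cosheaves}. By \autoref{prop:kimura} applied with $C = \Zz/2$, both $F = \CH_\bullet(-) \otimes_\Zz \Zz/2$ and $G = \CH_{\bullet-1}(-) \otimes_\Zz \Zz/2$ are objects of $\coshv(\Cc)$.

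Next I would regard the operation $\Sq$ of \cite[\S7]{duality} as a morphism in $\Hom(\Dc, \Ab)$ between the restrictions of $F$ and $G$ to $\Dc$; this uses precisely the compatibility of the construction of \cite{duality} with proper push-forwards among quasi-projective schemes. By full faithfulness in \autoref{prop:cosheaves}, this morphism extends uniquely to a morphism $F \to G$ in $\coshv(\Cc)$, which unpacks to a unique natural family $\Sq_X \colon \CH_\bullet(X) \otimes_\Zz \Zz/2 \to \CH_{\bullet-1}(X) \otimes_\Zz \Zz/2$, for $X \in \Sch$, commuting with all proper push-forwards and restricting to the original $\Sq$ on quasi-projective schemes.

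To establish compatibility with open restrictions, given an open immersion $u\colon U \hookrightarrow X$ I would pick a Chow envelope $\pi\colon Y \to X$ with $Y$ quasi-projective and form the cartesian square
\[
\xymatrix{
V \ar[r]^v \ar[d]_g & Y \ar[d]^\pi \\
U \ar[r]^u & X
}
\]
in which $V$ is open in $Y$ (hence quasi-projective) and $g$ is a proper envelope of $U$. The base-change identity $u^* \pi_* = g_* v^*$ on Chow groups, combined with the compatibility of the original $\Sq$ with open restrictions between quasi-projective schemes (from \cite[\S7]{duality}) and the compatibility with proper push-forwards just established on all of $\Sch$, gives the chain
\[
u^* \Sq_X \pi_* = u^* \pi_* \Sq_Y = g_* v^* \Sq_Y = g_* \Sq_V v^* = \Sq_U g_* v^* = \Sq_U u^* \pi_*.
\]
Since $\pi_*$ is surjective on Chow groups modulo $2$ (this is the existence half of the cosheaf sequence in \autoref{prop:kimura}), we may cancel $\pi_*$ on the right to obtain $u^* \Sq_X = \Sq_U u^*$.

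I do not expect a serious obstacle: the real work has been absorbed into \autoref{prop:cosheaves} and the fact that Kimura's envelope-descent theorem holds on Chow groups modulo two, so the extension is purely formal. The only points of caution are (i) confirming that the operation of \cite[\S7]{duality} is genuinely natural under proper push-forwards between quasi-projective schemes (so that it lives in $\Hom(\Dc,\Ab)$ in the first place), and (ii) ensuring that compatibility with open restrictions was also proved there on the quasi-projective subcategory, so that the final diagram chase has all of its ingredients.
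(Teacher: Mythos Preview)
Your proposal is correct and follows essentially the same route as the paper: apply \autoref{prop:cosheaves} with $\Cc=\Sch$, envelope covers from \autoref{prop:kimura} (with $C=\Zz/2$), and $\Dc$ the quasi-projective subcategory, then verify open-restriction compatibility by a diagram chase over a Chow envelope using surjectivity of $\pi_*$. The paper's chain of equalities is the same as yours up to reordering, and it cites \cite[Proposition~7.6]{duality} for precisely the quasi-projective open-restriction compatibility you flagged as point (ii).
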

\begin{proof}
By \autoref{prop:kimura} with $C=\Zz/2$ and existence of Chow's envelopes (see \ref{def:envelope}), the conditions of \autoref{prop:cosheaves} are satisfied for $\Cc=\Sch$ with covers given by envelopes and $\Dc$ the full subcategory of quasi-projective $S$-schemes. Therefore the operation $\Sq$ of \cite[\S7]{duality} extends uniquely to a morphism $\Sq$ of functors $\Sch \to \Ab$.

Let $u \colon U \to X$ be an open immersion. Let $\pi \colon Y \to X$ be a Chow envelope over $S$, and form the cartesian square
\[ \xymatrix{
V\ar[r]^v \ar[d]_\tau & Y \ar[d]^\pi \\ 
U \ar[r]^u & X
}\]
Then $v$ is an open immersion between quasi-projective $S$-schemes, hence $\Sq \circ v^*=v^* \circ \Sq$ by \cite[Proposition~7.6]{duality}. We have
\[
\Sq \circ u^* \circ \pi_* = \Sq \circ \tau_* \circ v^*=\tau_* \circ \Sq \circ v^*=\tau_* \circ v^* \circ \Sq=u^* \circ \pi_* \circ \Sq=u^* \circ \Sq \circ \pi_*.
\]
Since $\pi_*$ is an epimorphism, it follows that $\Sq \circ u^*=u^* \circ \Sq$, hence $\Sq$ is compatible with restrictions to open subschemes.
\end{proof}

\begin{lemma}
\label{lemm:regsq}
Let $X$ be a regular scheme, quasi-projective over a field. Then 
\[
\Sq \circ \Sq[X]=0 \in \CH(X) \otimes_{\Zz} \Zz/2.
\]
\end{lemma}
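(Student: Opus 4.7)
The plan is to compute $\Sq[X]$ and $\Sq \circ \Sq[X]$ explicitly via the canonical bundle of $X$. Since $X$ is regular and quasi-projective over $k$, \autoref{lemm:decomp_qproj} ensures that $X$ embeds as a regular closed subscheme of a smooth $k$-scheme; in particular the sheaf of differentials $\Omega_{X/k}$ is locally free, so the canonical line bundle $\omega_X = \det \Omega_{X/k}$ is well-defined.

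The first ingredient I would extract from \cite{duality} is the explicit formula
\[
\Sq[X] = c_1(\omega_X) \cap [X] \in \CH_{\dim X - 1}(X) \otimes_{\Zz} \Zz/2,
\]
which reflects, via Grothendieck--Riemann--Roch, the identity $\ch_{\dim X - 1}([\Oc_X]) \equiv \tfrac{1}{2} c_1(\omega_X) \cap [X]$ modulo integral classes (the Todd class expansion in codimension one).

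The second ingredient is a Cartan-type (Leibniz) formula for $\Sq$ on cap products with first Chern classes of line bundles, of the shape
\[
\Sq(c_1(L) \cap z) = c_1(L)^2 \cap z + c_1(L) \cap \Sq(z).
\]
Applied to $z = [X]$ and $L = \omega_X$, combined with the first ingredient, this yields
\[
\Sq \circ \Sq[X] = c_1(\omega_X)^2 \cap [X] + c_1(\omega_X) \cap \Sq[X] = 2 \cdot c_1(\omega_X)^2 \cap [X] \equiv 0 \pmod 2,
\]
which is the desired relation.

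The main obstacle is to justify the two input identities in the regular (not necessarily smooth) setting allowed by the hypothesis, which is the delicate case when the base field is imperfect of characteristic two and $X$ may fail to be smooth over $k$. Both should follow by tracing the deformation-to-the-normal-cone construction of $\Sq$ in \cite{duality} through a regular embedding of $X$ into a smooth $k$-scheme provided by \autoref{lemm:decomp_qproj}, together with the compatibility of $\Sq$ with Gysin maps for effective Cartier divisors. An alternative route, if these formulas are not readily available, would be to reduce $[X]$ to the class of a regular hyperplane section (after blow-up, using \autoref{th:res} if needed) and argue by induction on $\dim X$.
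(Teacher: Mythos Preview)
Your overall strategy matches the paper's: compute $\Sq[X]=c_1(\omega_X)\cap[X]$, then use a Cartan-type identity for $\Sq$ against $c_1$ of a line bundle to get $2c_1(\omega_X)^2\cap[X]\equiv 0$. The paper carries this out by writing $c_1(L_X)=s^*\circ s_*$ for the zero section $s\colon X\hookrightarrow L_X$ and then using that $\Sq$ commutes with the proper push-forward $s_*$ and satisfies $\Sq\circ s^*=s^*\circ\Sq + c_1(L_X)\circ s^*$ (this is \cite[Proposition~7.6]{duality}); unwinding these two compatibilities is exactly your Leibniz formula.

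There is, however, a genuine error in your setup. The implication ``$X$ admits a regular closed embedding into a smooth $k$-scheme, hence $\Omega_{X/k}$ is locally free'' is false when $k$ is imperfect: regularity does not force smoothness, and $\Omega_{X/k}$ can fail to be locally free (or have the wrong rank). What the regular embedding $i\colon X\hookrightarrow W$ with $W$ smooth does give you is a \emph{virtual} tangent class $\Tan_X=[i^*\Tan_W]-[N_{X/W}]\in\K_0(X)$, and hence a genuine line bundle $L_X$ representing $\det\Tan_X$; its sheaf of sections is the canonical (dualizing) sheaf of $X$. The formula \cite[(11)]{duality} is stated for this $c_1(\Tan_X)$, not for $c_1(\det\Omega_{X/k})$. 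Once you replace $\omega_X=\det\Omega_{X/k}$ by this $L_X$, both of your ``ingredients'' are available directly from \cite{duality} without any further reduction, deformation, or induction: the first is \cite[(11)]{duality}, and the second follows immediately from \cite[Proposition~7.6]{duality} via $c_1(L)=s^*s_*$ as above. Your suggested fallback (blow-ups, \autoref{th:res}, induction on $\dim X$) is unnecessary.
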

\begin{proof}
Let $\Tan_X \in \K_0(X)$ be the virtual tangent bundle of $X$, and $L_X$ a line bundle over $X$ representing the image of $\Tan_X$ under the group homomorphism $\det \colon \K_0(X) \to \Pic(X)$ (the sheaf of sections of $L_X$ is the canonical sheaf of $X$). Let $s \colon X \hookrightarrow L_X$ be its zero section. Then the first Chern class operators satisfy:
\[
c_1(\Tan_X) = c_1(L_X) = s^* \circ s_* \colon \CH(X) \to \CH(X).
\]
By \cite[(11)]{duality}, we have $\Sq[X] = c_1(\Tan_X)[X]$ in $\CH(X) \otimes_{\Zz} \Zz/2$. Using \cite[Proposition~7.6]{duality}, we have in $\CH(X) \otimes_{\Zz} \Zz/2$
\begin{align*} 
\Sq \circ \Sq[X]   &= \Sq \circ s^* \circ s_*[X]  \\ 
 &= s^* \circ \Sq \circ s_*[X] + c_1(L_X) \circ s^* \circ s_*[X]\\
 &= s^* \circ s_* \circ \Sq[X] + c_1(L_X) \circ s^* \circ s_*[X]\\
 &= s^* \circ s_* \circ c_1(L_X)[X] + c_1(L_X) \circ s^* \circ s_*[X]\\
 &= c_1(L_X)\circ c_1(L_X)[X] + c_1(L_X)\circ c_1(L_X)[X]\\
 &=0 \mod 2.\qedhere
 \end{align*}
\end{proof}

\begin{proposition}
We have $\Sq \circ \Sq=0$.
\end{proposition}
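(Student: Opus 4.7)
The plan is to apply the detection principle \autoref{lemm:test1} to the operation $\Sq \circ \Sq$, reducing the global claim to the regular quasi-projective case already established in \autoref{lemm:regsq}. Since $\Sq \circ \Sq$ lowers dimension by two, I would argue degree by degree: fix an integer $j$, and show that $\Sq \circ \Sq$ vanishes on $\CH_j(X) \otimes_\Zz \Zz/2$ for every scheme $X$ of finite type over the base field.

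With $T = S$ the spectrum of the field and $d = j$, I would take $G = \CH_j(-) \otimes_\Zz \Zz/2$ as in \autoref{ex:GCH} (so that $1_X = [X]$ when $\dim_S X = j$ and $1_X = 0$ otherwise), and $H = \CH_{j-2}(-) \otimes_\Zz \Zz/2$. \autoref{ex:GCH} immediately supplies conditions \ref{F:open}--\ref{F:dim} for $G$, the generation condition \ref{G:gen} for $G$ (with $d = j$), and conditions \ref{F:open}, \ref{F:loc} for $H$; moreover condition \ref{H:dim} holds since $\CH_{j-2}(X) = 0$ as soon as $\dim_S X < j - 2$. I would then set $\rho_X = \Sq \circ \Sq \colon G(X) \to H(X)$. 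Conditions \ref{rho:proper} and \ref{rho:open} on $\rho$ follow from the corresponding properties of $\Sq$ supplied by \autoref{cor:extsq}. The remaining condition \ref{rho:regular}, for a regular $T$-scheme $R$ quasi-projective over $S$, reduces to \autoref{lemm:regsq} when $\dim_S R = j$ and is trivial when $1_R = 0$. Invoking \autoref{lemm:test1} yields $\rho_X = 0$ for every $X \in \Sch$, and letting $j$ vary gives the conclusion.

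I do not anticipate a serious obstacle: the substantive work has been packaged into \autoref{th:res}, \autoref{lemm:test1}, \autoref{cor:extsq}, and \autoref{lemm:regsq}. The one point worth flagging is the role of \autoref{cor:extsq}: without the descent-extension of $\Sq$ to arbitrary schemes of finite type over the field, together with its compatibility with proper push-forwards and open restrictions, the hypotheses \ref{rho:proper} and \ref{rho:open} of \autoref{lemm:test1} would be unavailable, and the detection principle could not be invoked at all.
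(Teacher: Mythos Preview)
Your proposal is correct and follows essentially the same approach as the paper: apply \autoref{lemm:test1} with $S=T$ the spectrum of the field, $G=\CH_d(-)\otimes_\Zz\Zz/2$ and $H=\CH_{d-2}(-)\otimes_\Zz\Zz/2$ as in \autoref{ex:GCH}, and $\rho=\Sq\circ\Sq$, checking \ref{rho:proper}, \ref{rho:open} via \autoref{cor:extsq} and \ref{rho:regular} via \autoref{lemm:regsq}. Your write-up is slightly more explicit about the case split in \ref{rho:regular} and about the role of \autoref{cor:extsq}, but there is no substantive difference.
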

\begin{proof}
We apply \autoref{lemm:test1} with $S=T$ the spectrum of a field, and with $G$, resp.\ $H$, given by \autoref{ex:GCH}, with $C=\Zz/2$ and $j=d$, resp.\ $j=d-2$, for the operation $\rho_X = \Sq \circ \Sq\colon \CH_d(X) \otimes_{\Zz} \Zz/2\to\CH_{d-2}(X) \otimes_{\Zz} \Zz/2$. Then the conditions \ref{rho:proper} and \ref{rho:open} follow from \autoref{cor:extsq}, and \ref{rho:regular} from \autoref{lemm:regsq}.
\end{proof}

\section{Bivariant Chow group}
\label{sect:bivariant}
In this section we consider the bivariant Chow group $\A(X \to T)$. Of special interest is the group $\A(\id_X)$, which can be considered as the universal cohomology theory acting on Chow groups (it has a ring structure and is contravariant for arbitrary morphisms of finite type). More generally when $X$ is a closed subscheme of $T$, the non-unital ring $\A(X \to T)$ corresponds to cohomology of $T$ with supports in $X$. We are interested in the commutativity of this ring, and in Poincar\'e duality-type statements.\\

We first define bivariant Chow groups following \cite[\S17, \S20]{Ful-In-98}, with the difference that we require bivariant classes to be compatible with smooth pull-backs, as opposed to arbitrary flat pull-backs (see however \autoref{rem:commt} \ref{rem:flat} and \autoref{th:Q} \ref{Q:dim}).
 
Let $S$ be a regular scheme, and $X \to T \to S$ morphisms of finite type. For $m\in \Zz$, we consider the set $\B^m(X \to T)$ of morphisms $\alpha$ of functors $\Scht \to \Ab$
\[
\CH_\bullet(-) \to \CH_{\bullet-m}(- \times_T X), \quad x \mapsto \alpha \cap x.
\]
Let $Y$ be a scheme of finite type over $T$. Any element $\alpha \in \B^m(X \to T)$ induces an element of $\B^m(X\times_T Y \to Y)$ which we still denote by $\alpha$. When $g \colon X \to T$ is a flat morphism of constant relative dimension $-r$, or a regular closed embedding of codimension $r$, there is an orientation class $[g] \in \B^r(X \to T)$, corresponding to the fact that base changes of such morphisms admit functorial (refined) pull-backs (see \cite[\S17.4]{Ful-In-98}). There is a product $\B^m(X \to T) \otimes_{\Zz} \B^n(Y \to T) \to \B^{m+n}(X \times_T Y \to T)$, written $a\otimes b \mapsto a\cdot  b$. 

The group $\A^m(X \to T)$ consists of the elements $\alpha \in \B^m(X \to T)$, called \emph{bivariant classes}, such that $[f]\cdot \alpha=\alpha \cdot [f]$ for $f$ any smooth morphism or regular closed embedding. We denote by $\A(X \to T)$ the direct sum of the groups $\A^m(X \to T)$ over the integers $m \in \Zz$. If $X \to T$ has relative dimension $\leq r$ (\autoref{ex:Hreldim}), then $\B^m(X \to T)=\A^m(X \to T)=0$ for $m <- r$.

The product of bivariant classes is a bivariant class, and so is $[g]$ for $g$ as above (flat or regular closed embedding). When $h \colon Y \to X$ is a morphism in $\Scht$, there is a morphism $h_* \colon \A^m(Y \to T) \to \A^m(X \to T)$.

\begin{remark}
\label{rem:divisor}
An element $\alpha \in \B^m(X \to T)$ such that $[f] \cdot \alpha = \alpha \cdot [f]$, for $f$ any smooth morphism or effective Cartier divisor with trivial normal bundle, is a bivariant class. This can be seen using deformation to the normal cone (see the proof of \cite[Theorem~17.1]{Ful-In-98}).
\end{remark}

\begin{example}
\label{ex:localised}
Let $X \to T$ be a closed embedding, and $\Ec_{\bullet}$ a bounded complex of locally free coherent $\Oc_T$-modules, which is exact off $X$. The $n$-th localised Chern class $c_n(\Ec_{\bullet})$ is an element of $\A^n(X \to T)$, see \cite[Example~18.1.3, \S20]{Ful-In-98}.
\end{example}

\begin{example}
\label{ex:chowcoh}
Let $X \to T$ be a closed embedding. Let $\Ko{n}{T}$ be the Zariski sheaf on $T$ associated with the $n$-th Quillen $K$-group of vector bundles. We describe a morphism
\[
\HK{n}{X}{T} \to \A^n(X \to T).
\]
For any scheme $Y$ of finite type over $T$, there is a morphism (see \cite[\S7, \S8]{Gil-Ri-81})
\[
\HK{n}{X}{T} \otimes \CH_d(Y) \to \CH_{d-n}(X \times_T Y).
\]
The projection formula \cite[Theorem~8.8]{Gil-Ri-81} says that it is compatible with proper push-forwards; therefore any element $\alpha \in \HK{n}{X}{T}$ defines an element of $\B^n(X \to T)$, which we also denote by $\alpha$. When $f$ is smooth (or more generally flat of constant relative dimension), the formula $[f] \cdot \alpha=\alpha \cdot [f]$ is easily verified. 

To show that $\alpha$ defines an element of $\A^n(X \to T)$, it will suffice by \autoref{rem:divisor} to consider an effective Cartier divisor $f \colon D \to T$ with trivial normal bundle, and prove that $\alpha \cdot [f]=[f] \cdot \alpha$. The invertible sheaf $\Oc_T(D)$, being trivial off $D$, gives rise to an element $\delta \in H_D^1(T,\Oc_T^\times)=\HK{1}{D}{T}$.

We claim that $[f] = \delta$. Indeed it is enough to check that $f^![Z] = \delta \cap [Z] \in \CH(Z)$ when $Z$ is an integral scheme of finite type over $T$. Assume first that $Z \to T$ factors through $f$. Since $f$ has trivial normal bundle, $\delta$ restricts to zero in $\HK{1}{D}{D}$, hence we have $\delta \cap [Z]=0$. On the other hand, $f^![Z]=0$ by e.g.\ \cite[Proposition~2.3 (e)]{Ful-In-98}. Otherwise $f$ induces an effective Cartier divisor with trivial normal bundle on $Z$, and we are reduced to assuming that $T=Z$. Then $f^![T]=[D]$, and $\delta \cap [T]=[D]$ (this computation is carried out in \cite[\S2]{Gil-K-87}).

The map, where $n\in\mathbb{N}$ and $Z$ runs over the closed subschemes of $T$,
\[
\oplus_{n,Z} \HK{n}{Z}{T} \to \oplus_{n,Z} \B^n(Z \to T)
\]
is a ring morphism by \cite[Theorem~8.2]{Gil-Ri-81}. It follows from the anti-commutativity of Quillen $K$-theory that the ring on the left is commutative. Thus the elements of the image of this morphism commute with one another, and in particular $\alpha$ commutes with $\delta=[f]$, as required.
\end{example}

\begin{lemma}
\label{lemm:RR}
Let $S$ be a regular excellent scheme, and $X$ an integral closed subscheme of $S$. Assume that $n=-\dim_S X >0$. There is a finite resolution $\Ec_\bullet$ of $\Oc_X$ by locally free coherent $\Oc_S$-modules, and for any such complex $\Ec_\bullet$, we have $c_n(\Ec_\bullet) \cap [S] = (-1)^{n-1}(n-1)! \cdot [X]$ in $\CH_{-n}(X)$.
\end{lemma}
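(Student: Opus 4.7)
Plan:

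Existence of a finite resolution follows from two ingredients. First, $S$ is separated and regular, hence divisorial by \cite[II, Corollaire~2.2.7.1]{sga6}, so every coherent $\Oc_S$-module is a quotient of a locally free coherent one (as in the proof of \autoref{lemm:decomp_qproj}). Iterating this produces a (possibly infinite) resolution of $\Oc_X$ by locally free coherent $\Oc_S$-modules; regularity of $S$ guarantees that its local rings have finite global dimension, so syzygies become locally free after finitely many steps and we may truncate.

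For the formula, observe that $X$ is integral of relative dimension $-n$ over $S$, so $\CH_{-n}(X) = \Zz\cdot [X]$ since $-n$ is the top dimension for cycles on $X$. It therefore suffices to verify the identity in $\CH_{-n}(X)\otimes_\Zz \Qq$, which allows us to bring in the localised Chern character $\ch^X(\Ec_\bullet)$. For every $i<n$ the class $c_i(\Ec_\bullet)\cap [S]$ lies in $\CH_{-i}(X)$, which vanishes for dimension reasons; thus the only nontrivial localised Chern class in degree $\leq n$ is $c_n$. Newton's identity, relating power sums to elementary symmetric functions in the Chern roots, therefore collapses to
\[
\ch^X_{-n}(\Ec_\bullet)\cap [S] \;=\; \tfrac{(-1)^{n-1}}{(n-1)!}\,c_n(\Ec_\bullet)\cap [S] \quad \text{in } \CH_{-n}(X)\otimes_\Zz \Qq.
\]
On the other hand, the Riemann-Roch theorem of \cite[Theorem~18.3, \S20]{Ful-In-98}, applied to the closed embedding $X\hookrightarrow S$ and the class $[\Oc_X]\in\K_0'(S)$ represented by $\Ec_\bullet$, identifies the top-dimensional component of $\ch^X(\Ec_\bullet)\cap[S]$ with the top-dimensional component of the Riemann-Roch class $\tau(\Oc_X)$, which is $[X]$ (all Todd corrections live in strictly smaller dimension). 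Combining the two identities and clearing the denominator yields the claim.

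The main obstacle is the Riemann-Roch step: one has to check that the localised Chern character of a resolution of $\Oc_X$, evaluated on $[S]$, really does produce the top-dimensional component of $\tau(\Oc_X)$, which requires comparing the localised Chern classes of \autoref{ex:localised} with the Baum-Fulton-MacPherson construction of \cite[\S18]{Ful-In-98}. Once this comparison is made, the argument is independent of the choice of $\Ec_\bullet$ and reduces to the symmetric-function manipulation above.
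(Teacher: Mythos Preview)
Your argument is correct and takes a genuinely different route from the paper's proof.

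The paper proceeds geometrically: it first removes the singular locus of $X$ (using that $\CH_{-n}(X)\to\CH_{-n}(X-\Sing X)$ is an isomorphism) to reduce to the case where $X\hookrightarrow S$ is a regular closed embedding, then applies deformation to the normal cone to reduce further to the case $S=\Pp(N\oplus 1)$ where the embedding admits a retraction $p\colon S\to X$. In that situation one can push the localised class forward into $\CH_{-n}(S)$, identify it with the ordinary Chern class $c_n(f_*[\Oc_X])\cap[S]$, invoke \cite[Example~15.3.1]{Ful-In-98} for the explicit value $(-1)^{n-1}(n-1)!\cdot[X]$, and then apply $p_*$ to return to $\CH_{-n}(X)$.

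Your approach bypasses both reductions by exploiting two facts: that $\CH_{-n}(X)=\Zz\cdot[X]$ is torsion-free (so the identity may be checked rationally), and that $S$ is already smooth over itself with trivial relative tangent bundle, so that in the relative Riemann--Roch formalism of \cite[\S18.3, \S20]{Ful-In-98} one has directly $\tau_X([\Oc_X])=\ch^X(\Ec_\bullet)\cap[S]$ with top component $[X]$. The ``obstacle'' you flag is not a genuine one: the identification of $\tau$ with the localised Chern character capped with the fundamental class is the \emph{definition} of $\tau$ in Fulton's construction (taking $M=S$ as the ambient smooth $S$-scheme), and the top-component property is \cite[Theorem~18.3~(5)]{Ful-In-98}, which carries over to the relative setting of \S20. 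Your Newton-identity step is also sound: every monomial in $c_1,\dots,c_{n-1}$ appearing in $\ch_n$ has a factor $c_i$ with $i<n$, and $c_i(\Ec_\bullet)\cap[S]\in\CH_{-i}(X)=0$.

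The trade-off: the paper's argument is self-contained at the level of Chern classes and the graph construction, never invoking the transformation $\tau$ or its well-definedness; your argument is shorter and more conceptual but imports the full Riemann--Roch package. Both are valid in the excellent regular setting the lemma requires.
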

\begin{proof}
The resolution $\Ec_\bullet$ exists because $S$ is regular, and moreover the class $c_n(\Ec_\bullet)$ is independent of the choice of that resolution. The singular locus $\Sing X$ is closed and not dense in $X$, because $X$ is quasi-excellent and integral. It follows from the localisation sequence that the morphism $\CH_{-n}(X) \to \CH_{-n}(X - \Sing X)$ is an isomorphism. We may therefore replace $S$ and $X$ by the respective complements of $\Sing X$, and assume that $X$ is regular. Then $X \to S$ is a regular closed embedding \cite[(19.1.1)]{ega-4}; we let $N$ be its normal bundle. We consider the deformation to the normal cone construction \cite[\S 6.1]{Ful-In-98}
\[ \xymatrix{
P\ar[r]^{j_\infty} & W &S\ar[l]_{j_0}\\ 
X \ar[r]^{i_\infty} \ar[u] & \Aa^1_X\ar[u]&X\ar[l]_{i_0}\ar[u]
}\]
where $P=\Pp(N \oplus 1)$, and $W$ is the blow-up of $\Aa^1_S$ along $X$ via the composite $X \xrightarrow{i_0} \Aa^1_X \to \Aa^1_S$. Each of the schemes $P$ and $\Aa^1_X$, being smooth over $X$, is regular by \cite[(17.5.8, (iii))]{ega-4}. 
The scheme $W$ is regular ($W-P$ is regular, being an open subscheme of $X$, and $W$ is regular at every point of $P$ by \cite[(19.1.1)]{ega-4}), hence we can find a finite resolution $E_\bullet$ of $\Oc_{\mathbb{A}^1_X}$ by locally free coherent $\Oc_W$-modules. Then $j_\infty^*E_\bullet$, resp.\ $j_0^*E_\bullet$, is a finite resolution of $\Oc_X$ by locally free coherent $\Oc_P$-modules, resp. $\Oc_S$-modules. Using the transversality of the squares in the diagram above, and the relation $i_0^*=i_\infty^* \colon \CH_{-n}(\Aa^1_X) \to \CH_{-n}(X)$, we have in $\CH_{-n}(X)$
\[
c_n(j_0^*E_\bullet ) \cap [S] = i_0^*(c_n(E_\bullet)\cap [W]) = i_\infty^*(c_n(E_\bullet) \cap [W]) = c_n(j_\infty^*E_\bullet ) \cap [P].
\]
Thus we may replace $S$ by $P$, and assume that $f \colon X \to S$ has a section $p\colon S \to X$. Then we have in $\CH_{-n}(S)$ (see the proof of \cite[Proposition~18.1 (a)]{Ful-In-98})
\[
f_*(c_n(\Ec_\bullet)\cap [S]) = \sum_i (-1)^i c_n(\Ec_i)[S] = c_n(f_*[\Oc_X])[S].
\]
This cycle coincides with $(-1)^{n-1}(n-1)! \cdot [X]$ by \cite[Example~15.3.1, \S20]{Ful-In-98}, and we conclude by applying $p_* \colon \CH_{-n}(S) \to \CH_{-n}(X)$.
\end{proof}

\begin{theorem}
\label{th:poincare}
Let $S$ be a regular excellent scheme, and $X$ a closed subscheme of $S$. Then for $n\leq 2$, the association $\alpha \mapsto \alpha \cap [S]$ induces an isomorphism
\[
\A^n(X \to S) \to \CH_{-n}(X).
\]
\end{theorem}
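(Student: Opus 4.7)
The plan is to establish injectivity via the detection principle of \autoref{lemm:test1} and surjectivity via localised Chern classes.

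For injectivity, I fix an integer $d$ and show that any $\alpha \in \A^n(X \to S)$ with $\alpha \cap [S] = 0$ acts trivially on $\CH_d(Y)$ for every $Y$ of finite type over $S$; varying $d$ then forces $\alpha=0$. To apply \autoref{lemm:test1} with $T=S$, I take $G = \CH_d(-)$ as in \autoref{ex:GCH} (with $j=d$) and $H(Y) = \CH_{d-n}(Y \times_S X)$ as in \autoref{ex:Hreldim} applied to the closed embedding $X \to S$ (so $r=0$ and $j=d-n$). Condition \ref{G:gen} is trivial, while \ref{H:dim} amounts to $d-n \geq d-2$, which is exactly the hypothesis $n \leq 2$. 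Setting $\rho_Y = \alpha \cap -$, conditions \ref{rho:proper} and \ref{rho:open} come for free from the bivariant formalism: $\alpha$ commutes with proper push-forwards by the projection formula, and with smooth pull-backs (hence in particular open immersions) by definition.

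The main obstacle is verifying \ref{rho:regular}: for a regular quasi-projective $S$-scheme $R$ with $\dim_S R = d$, I must show $\alpha \cap [R] = 0$. This is exactly where the geometric input \autoref{th:res} (funnelled through \autoref{lemm:test1}) interacts with the structure of regular quasi-projective schemes via \autoref{lemm:decomp_qproj}: the latter lets me factor $R \to S$ as $R \xrightarrow{i} W \xrightarrow{p} S$ with $p$ smooth and $i$ a regular closed embedding, so that $[R] = [i] \cap [p] \cap [S]$. Because $\alpha$ is by definition a bivariant class, it commutes past both orientation classes $[p]$ and $[i]$; this reduces $\alpha \cap [R]$ to (the appropriate base changes of) $[i]$ and $[p]$ applied to $\alpha \cap [S]$, which vanishes by hypothesis. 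This is the only nonformal step in the injectivity argument, and everything else is bookkeeping.

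For surjectivity, $\CH_{-n}(X)$ is generated by classes $[Z]$ with $Z \subseteq X$ integral of codimension $n$ in $S$, so it suffices to lift each such generator. For $n \in \{1,2\}$, I choose a finite resolution $\Ec_\bullet$ of $\Oc_Z$ by locally free coherent $\Oc_S$-modules (available by regularity of $S$, as in \autoref{lemm:RR}), form the localised Chern class $c_n(\Ec_\bullet) \in \A^n(Z \to S)$ of \autoref{ex:localised}, and push it forward along $Z \hookrightarrow X$; \autoref{lemm:RR} then yields $c_n(\Ec_\bullet) \cap [S] = (-1)^{n-1}(n-1)! \cdot [Z] = \pm [Z]$, the coefficient being a unit precisely for these values of $n$. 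For $n = 0$, such a $Z$ is an irreducible component of $S$, hence clopen in the regular scheme $S$; the restriction-to-$Z$ operation defines a class in $\A^0(Z \to S)$ sending $[S]$ to $[Z]$, which I push forward to $X$. For $n < 0$ both sides vanish, and the statement is automatic.
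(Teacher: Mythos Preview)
Your proof is correct and follows essentially the same route as the paper's: injectivity via \autoref{lemm:test1} with the factorisation from \autoref{lemm:decomp_qproj}, surjectivity via push-forwards of localised Chern classes from \autoref{lemm:RR}. The only cosmetic difference is the $n=0$ case of surjectivity, where the paper treats $Z$ uniformly via the complex $\Ec_\bullet=\Oc_Z[-1]$ (which is legitimate since $Z$ is clopen in $S$), while you use the restriction map directly; both arguments are valid and amount to the same bivariant class.
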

\begin{proof}
Let $\alpha \in \A^n(X \to S)$ be such that $\alpha \cap [S]=0$. Let $r \colon R \to S$ be a quasi-projective morphism, with $R$ regular. By \autoref{lemm:decomp_qproj}, we can write $r = p\circ i$, with $i$ a regular closed embedding and $p$ a smooth morphism. We have in $\CH(R \times_S X)$
\[
\alpha \cap [R] =(\alpha \cdot [i] \cdot [p]) \cap [S] = ([i] \cdot [p] \cdot \alpha) \cap [S]=0.
\]
Let $d$ be an integer. For any scheme $Y$ of finite type over $S$, we let $\rho_Y \colon \CH_d(Y) \to \CH_{d-n}(Y \times_S X)$ be defined by $\rho_Y(x)=\alpha\cap x$. Then $\rho$ satisfies the conditions of \autoref{lemm:test1} for $T=S$, with $G$ as in \autoref{ex:GCH} ($C=\Zz$, $j=d$) and $H$ as in \autoref{ex:Hreldim} ($r=0$, $j=d-n$). This proves that $\alpha=0$, showing injectivity.

Let $i \colon Z \to X$ be a closed embedding, with $Z$ integral and $\dim_S Z = -n \leq 0$. There is a bounded complex $\Ec_\bullet$ of locally free coherent $\Oc_S$-modules, exact off $Z$, such that $c_n(\Ec_\bullet) \cap [S]=(-1)^{n-1}\cdot [Z] \in \CH_{-n}(Z)$; this follows from \autoref{lemm:RR} when $n \in \{1,2\}$, and we may take $\Ec_\bullet=\Oc_Z[-1]$ when $n=0$. Therefore $\alpha=i_*(c_n(\Ec_\bullet))$ is an antecedent of the class $(-1)^{n-1}\cdot[Z]$ under the map of the statement. Since such classes generate the group $\CH_{-n}(X)$, we have proved surjectivity.
\end{proof}

Let us record the following statement, obtained in the course of the proof above.
\begin{lemma}
\label{sch:cne}
Let $S$ be a regular excellent scheme, and $X$ a closed subscheme of $S$. For $n\leq 2$, the group $\A^n(X \to S)$ is generated by the elements $i_*(c_n(\Ec_\bullet))$, where $i\colon Z \to X$ is a closed embedding, and $\Ec_\bullet$ a bounded complex of locally free coherent $\Oc_S$-modules, which is exact off $Z$.
\end{lemma}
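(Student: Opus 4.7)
The plan is to extract the lemma directly from the isomorphism of \autoref{th:poincare} and the explicit construction used to prove its surjectivity. Given $\alpha \in \A^n(X \to S)$, I would first write its image under $\alpha \mapsto \alpha \cap [S]$ as a finite $\Zz$-linear combination
\[
\alpha \cap [S] = \sum_k n_k \cdot [Z_k] \in \CH_{-n}(X),
\]
where each $Z_k \hookrightarrow X$ is an integral closed subscheme with $\dim_S Z_k = -n$ and $n_k \in \Zz$. This is just the defining presentation of the Chow group.

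Next, for each $k$, viewing $Z_k$ as a closed subscheme of $S$ via the composition $Z_k \hookrightarrow X \hookrightarrow S$, the surjectivity argument in the proof of \autoref{th:poincare} supplies a bounded complex $\Ec_\bullet^k$ of locally free coherent $\Oc_S$-modules, exact off $Z_k$, satisfying
\[
c_n(\Ec_\bullet^k) \cap [S] = (-1)^{n-1}\cdot [Z_k] \in \CH_{-n}(Z_k),
\]
obtained from \autoref{lemm:RR} when $n \in \{1,2\}$, and taking $\Ec_\bullet^k = \Oc_{Z_k}[-1]$ when $n = 0$. Writing $i_k \colon Z_k \hookrightarrow X$ for the induced closed embedding, I would then form the candidate
\[
\beta = (-1)^{n-1}\sum_k n_k \cdot i_{k*}\bigl(c_n(\Ec_\bullet^k)\bigr) \in \A^n(X \to S),
\]
which is by construction a $\Zz$-linear combination of elements of the form specified in the statement.

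Using the compatibility of the cap product with proper push-forwards (\cite[\S17]{Ful-In-98}), the image of $\beta$ under the map $\A^n(X \to S) \to \CH_{-n}(X)$ of \autoref{th:poincare} computes as
\[
\beta \cap [S] = \sum_k n_k \cdot [Z_k] = \alpha \cap [S].
\]
The injectivity half of \autoref{th:poincare}, which rests on \autoref{lemm:test1}, then forces $\alpha = \beta$, yielding the claim. There is no genuine obstacle here, since both directions of \autoref{th:poincare} have been established: the proof amounts to observing that the preimages constructed during the surjectivity argument can be chosen to be of the advertised form, after which uniqueness is automatic.
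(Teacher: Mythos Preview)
Your proposal is correct and is essentially the paper's own argument written out in full: the paper merely records the lemma as ``obtained in the course of the proof above,'' meaning exactly that the explicit preimages $i_*(c_n(\Ec_\bullet))$ built in the surjectivity step of \autoref{th:poincare}, combined with the injectivity of the map $\alpha \mapsto \alpha \cap [S]$, force every class in $\A^n(X \to S)$ to be a $\Zz$-linear combination of such elements.
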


\begin{proposition}
\label{cor:comm}
Let $S$ be a regular excellent scheme, $X$ a closed subscheme of $S$, and $Y$ a scheme of finite type over $S$. Then for any $\alpha \in \A^n(X \to S)$, with $n \leq 2$, and $\beta \in \A(Y \to S)$, we have $\alpha \cdot \beta = \beta \cdot \alpha$.
\end{proposition}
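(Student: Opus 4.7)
The plan is to use \autoref{sch:cne} to reduce $\alpha$ to a localized Chern class, and then invoke the standard commutativity of such classes with arbitrary bivariant classes.

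First I would apply \autoref{sch:cne} to write $\alpha$ as a finite sum of classes of the form $i_*(c_n(\Ec_\bullet))$, where $i\colon Z\hookrightarrow X$ is a closed embedding and $\Ec_\bullet$ is a bounded complex of locally free coherent $\Oc_S$-modules exact off $Z$. The bivariant projection formula (an axiom of bivariant theory, \cite[\S17.2]{Ful-In-98}) then gives
\[
i_*(\gamma)\cdot \beta = (i\times_S \id_Y)_*(\gamma\cdot \beta),\qquad \beta\cdot i_*(\gamma) = (\id_Y\times_S i)_*(\beta\cdot \gamma),
\]
and the two expressions land in the same group under the canonical isomorphism $Z\times_S Y\cong Y\times_S Z$. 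The problem is thus reduced to proving that, for a closed subscheme $Z$ of $S$ and a bounded complex $\Ec_\bullet$ of locally free coherent $\Oc_S$-modules exact off $Z$, the localized Chern class $c_n(\Ec_\bullet)\in \A^n(Z\to S)$ commutes with $\beta\in \A(Y\to S)$.

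For this residual commutativity I would appeal to the standard theory of localized Chern classes, developed in \cite[\S18]{Ful-In-98} via MacPherson's graph construction: $c_n(\Ec_\bullet)$ is expressed through ordinary Chern classes of the terms $\Ec_i$ (which are vector bundles on the regular ambient $S$), and ordinary Chern classes commute with arbitrary bivariant classes by the splitting principle, the base case being the first Chern class of a line bundle, itself accessible through the $K$-cohomological description of \autoref{ex:chowcoh}.

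The main obstacle is precisely this last invocation: one must check that Fulton's commutativity arguments transport to the present framework, where bivariant classes are required to commute only with \emph{smooth} pull-backs (not with arbitrary flat ones). Should this transport prove delicate, a backup strategy entirely internal to the paper is to apply the detection principle \autoref{lemm:test1} to the commutator $\alpha\cdot \beta-\beta\cdot \alpha$, which is itself a bivariant class acting on Chow groups: by \autoref{lemm:decomp_qproj}, every regular quasi-projective $S$-scheme $R$ factors as a regular closed embedding into a smooth $S$-scheme, so both $\alpha$ and $\beta$ commute with the orientation class of $R\to S$, reducing the vanishing on $1_R$ required by \ref{rho:regular} to the corresponding statement over $S$, which can be treated as a separate base case via $K$-cohomology.
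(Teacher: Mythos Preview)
Your proposal is correct and follows the same overall route as the paper: reduce via \autoref{sch:cne} to classes of the form $i_*(c_n(\Ec_\bullet))$, handle $i_*$ by the compatibility of $\beta$ with proper push-forwards, and then argue that the localized Chern class itself commutes with $\beta$.

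The paper's treatment of this last point is more direct than yours and dissolves the worry you raise about smooth versus flat pull-backs. Rather than unwinding $c_n(\Ec_\bullet)$ into ordinary Chern classes and appealing to the $K$-cohomological description of $c_1$, the paper simply observes that MacPherson's graph construction of $c_n(\Ec_\bullet)$ uses only proper push-forwards, smooth pull-backs (the projection from $\Pp^1$), and Gysin maps for regular closed embeddings. These are exactly the operations with which any $\beta\in\A(Y\to S)$ is required to commute, by the definitions adopted in this section; so $\beta\cdot c_n(\Ec_\bullet)=c_n(\Ec_\bullet)\cdot\beta$ holds tautologically, with no transport issue from Fulton's framework. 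Your detour through the splitting principle and \autoref{ex:chowcoh} is therefore unnecessary, and your backup detection-principle argument is not needed here either (that idea is instead what powers the next statement, \autoref{cor:commt}).
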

\begin{proof}
We may assume that $\alpha=i_*(c_n(\Ec_\bullet))$, as in \autoref{sch:cne}. Since the construction of the class $c_n(\Ec_\bullet)$ only uses proper push-forwards, smooth pull-backs, and Gysin maps, one checks that $\beta \cdot c_n(\Ec_{\bullet})=c_n(\Ec_{\bullet}) \cdot \beta$. But $\beta$ is compatible with $i_*$, by definition of $\B^m(Y\to S)$.
\end{proof}

\begin{proposition}
\label{cor:commt}
Let $S$ be a regular excellent scheme, and $Y \to T \to S$ morphisms of finite type. Let $X$ be a closed  subscheme of $T$. Assume that $X \times_T Y \to T$ has relative dimension $\leq r$ (\autoref{ex:Hreldim}). For any $\alpha \in \A^n(X \to T)$ and $\beta \in \A^m(Y \to T)$, with $m+n +r \leq 2$ and $n\leq 2$, we have $\alpha \cdot \beta = \beta \cdot \alpha$.
\end{proposition}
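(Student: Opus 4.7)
The plan is to apply \autoref{lemm:test1} to the difference
\[
\gamma = \alpha \cdot \beta - \beta \cdot \alpha \in \B^{m+n}(X \times_T Y \to T),
\]
which we want to show vanishes. Fix an integer $d$; take $G(Z) = \CH_d(Z)$ as in \autoref{ex:GCH} and $H(Z) = \CH_{d-m-n}(Z \times_T X \times_T Y)$ as in \autoref{ex:Hreldim}, applied with $P = X \times_T Y \to T$ (which has relative dimension $\leq r$ by assumption) and $j = d-m-n-r$. The hypothesis $m+n+r \leq 2$ translates exactly into $j \geq d-2$, so condition \ref{H:dim} holds. Define $\rho_Z(x) = \gamma \cap x$ for $Z \in \Scht$; since $\alpha$ and $\beta$ are bivariant classes, they commute with proper push-forwards and with pull-backs along smooth morphisms (in particular open immersions), so $\rho$ satisfies \ref{rho:proper} and \ref{rho:open}.

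The crucial point is \ref{rho:regular}. Let $R$ be a regular $T$-scheme quasi-projective over $S$. Since $R$ is of finite type over the excellent scheme $S$, it is itself excellent, hence a regular excellent scheme in its own right. Base change along $R \to T$ yields classes $\alpha_R \in \A^n(X_R \to R)$ and $\beta_R \in \A^m(Y_R \to R)$, where $X_R = X \times_T R$ is a closed subscheme of $R$ (because $X$ is closed in $T$) and $Y_R = Y \times_T R$ is of finite type over $R$. Now apply \autoref{cor:comm} with the role of the base $S$ played by the regular excellent scheme $R$: the hypothesis $n \leq 2$ is retained, and we deduce $\alpha_R \cdot \beta_R = \beta_R \cdot \alpha_R$ as bivariant classes over $R$. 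Evaluating this identity on the fundamental class $[R]$, and using compatibility of bivariant products with base change, gives $\gamma \cap [R] = 0$.

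\autoref{lemm:test1} then yields $\rho_Z = 0$ for every $Z \in \Scht$, that is, $\gamma \cap x = 0$ for every $x \in \CH_d(Z)$. Varying $d$ and $Z$, we conclude $\gamma = 0$. The main obstacle is the verification of \ref{rho:regular}: it requires re-expressing the commutativity statement over the base $T$ as a commutativity statement over the \emph{regular excellent} base $R$, so that the already-proved absolute case \autoref{cor:comm} becomes applicable. This is exactly what base change of bivariant classes and the excellence of $R$ deliver.
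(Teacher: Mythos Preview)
Your proof is correct and follows essentially the same approach as the paper's: set $\rho_B(x)=(\alpha\cdot\beta-\beta\cdot\alpha)\cap x$, take $G$ from \autoref{ex:GCH} and $H$ from \autoref{ex:Hreldim} with $j=d-m-n-r$, and invoke \autoref{cor:comm} for the regular excellent base $R$ to check \ref{rho:regular}. The paper's proof is just terser, but the logic is identical.
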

\begin{proof}
Let $d$ be an integer. For any scheme $B$ of finite type over $T$, we let $\rho_B \colon \CH_d(B) \to \CH_{d-m-n}(B\times_T X \times_T Y)$ be defined by $\rho_B(x)=(\alpha \cdot \beta - \beta \cdot \alpha)\cap x$. Let $R$ be a regular scheme of finite type over $T$. We have $\rho_R[R]=0$ by \autoref{cor:comm} since $n\leq 2$. We conclude using \autoref{lemm:test1}, with $G$ as in \autoref{ex:GCH} ($C=\Zz$, $j=d$) and $H$ as in \autoref{ex:Hreldim} ($j=d-r-m-n$).
\end{proof}

\begin{remark}
\label{rem:commt}
\autoref{cor:commt} applies in particular in the following situations.
\begin{enumerate}[label=(\roman{*}), ref=(\roman{*})]
\item $Y$ is a closed subscheme of $T$, and $m=n=1$, $r=0$. This says that elements of degree one commute in the ring $\bigoplus_Z \A(Z \to T)$, where $Z$ runs over the closed subschemes of $T$.

\item \label{rem:flat} $f \colon Y \to T$ is flat of constant relative dimension $r=-m$, $\beta=[f]$, $n \leq 2$. This says that any element of $\A^n(X \to T)$, with $n\leq 2$, is automatically compatible with flat pull-backs. Therefore our definition of $\A^n(X \to T)$ agrees with that of \cite[\S17]{Ful-In-98} when $n\leq 2$ and $X \to T$ is a closed embedding.
\end{enumerate}
\end{remark}

\begin{remark}
When $S$ is a field of characteristic zero, then bivariant classes of arbitrary degrees commute \cite[Example~17.4.4]{Ful-In-98}. 

When $S$ smooth over a field, and $X$ a scheme of finite type over $S$, then the map $\A(X\to S) \to \CH(X)$ is known to be an isomorphism \cite[Propositions~17.3.1 and 17.4.2]{Ful-In-98}.
\end{remark}

The results of this section can be improved in characteristic zero, as follows.

\begin{theorem}
\label{th:Q}
Let $S$ be a regular excellent $\Qq$-scheme, and $X$ a scheme of finite type over $S$.
\begin{enumerate}[label=(\roman{*}), ref=(\roman{*})]
\item \label{Q:poincare} The morphism $\A(X \to S) \to \CH(X)$ is an isomorphism. 

\item \label{Q:com} If $A$ and $B$ are schemes of finite type over $X$, and $\alpha \in \A(A \to X)$, $\beta \in \A(B \to X)$, we have $\alpha \cdot \beta = \beta \cdot \alpha$.

\item \label{Q:dim} For any scheme $Y$ finite type over $X$, the group $\A(Y \to X)$ coincides with that of \cite[\S17,\S20]{Ful-In-98}.
\end{enumerate}
\end{theorem}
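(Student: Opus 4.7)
The plan is to prove the three parts in sequence, each by applying \autoref{lemm:test1} together with the simplification of \autoref{rem:Q}: thanks to resolution of singularities for quasi-excellent $\Qq$-schemes, conditions \ref{H:dim} and \ref{rho:open} may be dropped, and one only needs to verify $\rho_R(1_R)=0$ for $R$ regular and quasi-projective over $S$.

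For \ref{Q:poincare}, I would handle injectivity exactly as in \autoref{th:poincare}: apply \autoref{lemm:test1}+\autoref{rem:Q} to $\rho_Y(y)=\alpha\cap y$ with $G$ as in \autoref{ex:GCH} and $H$ as in \autoref{ex:Hreldim}, and verify the vanishing on regular quasi-projective $R$ using the factorization $R\xrightarrow{i}W\xrightarrow{p}S$ of \autoref{lemm:decomp_qproj} together with the compatibility of $\alpha$ with the orientation classes $[i]$ and $[p]$. For surjectivity, given an integral closed subscheme $Z\subset X$, I would combine Chow's Lemma with Temkin's resolution of singularities to produce a projective birational morphism $\widetilde{Z}\to Z$ with $\widetilde{Z}$ regular and quasi-projective over $S$; applying \autoref{lemm:decomp_qproj} to $\widetilde{Z}\to S$ then yields a bivariant class $[i]\cdot[p]\in\A(\widetilde{Z}\to S)$, whose proper push-forward along $\widetilde{Z}\to X$ is an element of $\A(X\to S)$ capping with $[S]$ to $[Z]$ (the latter because $\widetilde{Z}\to Z$ is birational).

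Part \ref{Q:com} is the heart of the argument. I would apply \autoref{lemm:test1}+\autoref{rem:Q} to $\rho_Y(y)=(\alpha\cdot\beta-\beta\cdot\alpha)\cap y$, reducing to the verification that $\rho_R[R]=0$ for $R$ regular and quasi-projective over $S$. The key idea is to invoke part \ref{Q:poincare} with $R$ in place of $S$, which is legitimate because $R$ is itself a regular excellent $\Qq$-scheme; this yields an isomorphism $\A(W\to R)\cong\CH(W)$ for every scheme $W$ of finite type over $R$. Under this identification the base-changed classes $\alpha_R$ and $\beta_R$ correspond to Chow classes on $R\times_X A$ and $R\times_X B$, and the bivariant products $\alpha_R\cdot\beta_R$ and $\beta_R\cdot\alpha_R$ correspond to external intersection products over the regular base $R$, which are symmetric under the canonical swap isomorphism $R\times_X A\times_X B\cong R\times_X B\times_X A$. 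This gives $\alpha_R\cdot\beta_R=\beta_R\cdot\alpha_R$ in $\A(R\times_X A\times_X B\to R)$, and capping with $[R]$ recovers $\rho_R[R]=0$.

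Finally, \ref{Q:dim} follows formally from \ref{Q:com}: for a flat morphism $f\colon Y'\to T$ of constant relative dimension, the orientation class $[f]$ is a bivariant class in Fulton's sense and in particular commutes with smooth pullbacks and with Gysin maps for regular closed embeddings, and hence already defines an element of our $\A(Y'\to T)$; \ref{Q:com} then forces $\alpha\cdot[f]=[f]\cdot\alpha$ for every $\alpha\in\A^n(X\to T)$, which is exactly the extra compatibility condition required by the definition of \cite[\S17]{Ful-In-98}. The main obstacle in the whole plan is the second half of part \ref{Q:com}, where one must precisely match the abstract bivariant product on $\A(\cdot\to R)$ with a symmetric external intersection product on Chow groups via the Poincar\'e isomorphism of \ref{Q:poincare}; once this matching is established, commutativity is immediate from the swap of fibre products.
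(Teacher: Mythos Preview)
Your approach diverges from the paper's in a genuine and interesting way, but the argument for \ref{Q:com} as written is circular.

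\textbf{Where you differ from the paper.} For surjectivity in \ref{Q:poincare}, the paper does \emph{not} use resolution of singularities: it reduces via Chow's Lemma and \autoref{lemm:decomp_qproj} to the case where $X\hookrightarrow S$ is a closed embedding, and then invokes Gersten's conjecture (Panin) to identify $\HK{n}{X}{S}\to\A^n(X\to S)\to\CH_{-n}(X)$ with Bloch's formula. Your route---resolve $Z$, factor $\widetilde{Z}\to S$ as $i\circ p$, push forward $[i]\cdot[p]$---is correct and avoids Gersten entirely. The paper's choice is not accidental, however: its surjectivity argument yields the decomposition $\pi_*(\alpha\cdot[p])$ with $\alpha$ a $K$-cohomology class, and this is precisely what is used to prove \ref{Q:com}. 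Namely, $K$-cohomology classes commute with one another (last paragraph of \autoref{ex:chowcoh}, via graded-commutativity of Quillen $K$-theory), and they commute with $[p]$ and with proper push-forwards by construction; hence bivariant classes over a regular base commute.

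\textbf{The gap in your \ref{Q:com}.} You reduce correctly to a regular base $R$ and then assert that under the Poincar\'e isomorphism of \ref{Q:poincare} the bivariant products ``correspond to external intersection products over the regular base $R$, which are symmetric.'' But over a regular (not smooth) base there is no independently defined external intersection product to match with: the only construction available is the bivariant one, via the isomorphism you have just established, and its symmetry is exactly the statement to be proved. So the step you flag as the ``main obstacle'' is not merely a matching problem---as phrased, it is circular.

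\textbf{How to repair it within your framework.} Your own surjectivity argument already gives what is needed: every class in $\A(A_R\to R)$ is a $\Zz$-linear combination of classes $\pi_*([i]\cdot[p])$. Now for any $\beta\in\A(B_R\to R)$ one has $\beta\cdot[i]=[i]\cdot\beta$ and $\beta\cdot[p]=[p]\cdot\beta$ \emph{by the definition of $\A$}, hence $\beta\cdot([i]\cdot[p])=([i]\cdot[p])\cdot\beta$; and $\beta$ commutes with $\pi_*$ because elements of $\B$ are natural transformations on $\mathsf{Sch}_R$. Unwinding the push-forwards gives $\beta\cdot\pi_*([i]\cdot[p])=\pi_*([i]\cdot[p])\cdot\beta$, and linearity finishes. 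This is the analogue, in your orientation-class language, of the paper's $K$-cohomology commutativity step; once stated this way, no appeal to an external product is needed. Your treatment of injectivity in \ref{Q:poincare} and of \ref{Q:dim} matches the paper.
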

\begin{proof}
We obtain the injectivity in \ref{Q:poincare} by proceeding as in the proof of \autoref{th:poincare}, using \autoref{rem:Q}.

To prove surjectivity in \ref{Q:poincare}, it will be sufficient to assume that $X$ is integral, and find an element $\alpha \in \A(X \to S)$ such that $\alpha \cap [S]=[X]$.  By Chow's lemma \cite[(5.6.1)]{ega-2}, we may assume that $X$ is quasi-projective over $S$, so that by \autoref{lemm:decomp_qproj}, the morphism $X\to S$ decomposes as $p\circ i$,where $i\colon X \to W$ is a closed embedding and $p\colon W \to S$ is smooth. Then for any $\beta \in \A(X \to W)$, we have $\beta \cap [W] = (\beta \cdot [p]) \cap [S]$, so that we may replace $S$ by $W$ (the latter is regular by \cite[(17.5.8, (iii))]{ega-4}), and assume that $X \to S$ is a closed embedding. Since Gersten's conjecture is true for the local rings of $S$ by \cite{Panin-Gersten_conjecture}, the composite
\[
\HK{n}{X}{S} \to \A^n(X \to S) \to \CH_{-n}(X)
\]
is an isomorphism (this is Bloch's formula, see for instance \cite[\S8 and Corollary~7.20]{Gil-Ri-81}). This proves surjectivity in \ref{Q:poincare}. 

Let us mention that we actually proved that any element of $\A^n(X \to S)$ decomposes as $\pi_*(\alpha \cdot [p])$, where $\pi \colon X'\to X$ is a proper morphism, $p \colon W \to S$ is smooth, $X'$ is a closed subscheme of $W$, and $\alpha \in \HK{n+\dim_S W}{X'}{W}$. 

If $A$ and $B$ are two closed subschemes of $S$, and $\alpha \in \HK{n}{A}{S}$, $\beta \in \HK{m}{B}{S}$, we have  $\alpha \cdot \beta = \beta \cdot \alpha$ in $\A^{m+n}(A \cap B \to S)$ (see the last paragraph of \autoref{ex:chowcoh}). From the decomposition $\pi_*(\alpha\cdot [p])$ mentioned above, we obtain \ref{Q:com} under the additional assumption that $X$ is regular. Then we remove this assumption by reasoning as in the proof of \autoref{cor:commt} (using \autoref{rem:Q}).

Finally \ref{Q:dim} follows from \ref{Q:com}, as in \autoref{rem:commt} \ref{rem:flat}.
\end{proof}

\paragraph{\textbf{Acknowledgements.}} I am grateful to the referee for his remarks and explanations, which helped improve the exposition.

\bibliographystyle{alpha}

\end{document}